\title{Strong approximation in random towers of graphs.}
\author{Yair Glasner}
\subjclass[2000]{Primary 20E08, 60J80; Secondary 20B27, 28A78, 11F06}
 \email{yair@ias.edu}
\address{School of Mathematics, Institute for Advanced Study, Einstein Drive Simonyi Hall, Princeton, NJ 08540.}
\newtheorem*{theorem}{Theorem}
\newtheorem*{lemma}{Lemma}
\newtheorem*{corollary}{Corollary}
\theoremstyle{definition}
\newtheorem*{definition}{Definition}
\newtheorem*{remark}{Remark}
\newcommand{\N}{{\mathbf{N}}}
\newcommand{\Z}{{\mathbf{Z}}}
\newcommand{\C}{{\mathbf{C}}}
\newcommand{\Q}{{\mathbf{Q}}}
\renewcommand{\P}{{\mathbf{P}}}
\newcommand{\w}{{\underline{w}}}
\renewcommand{\a}{{\underline{a}}}
\newcommand{\arrow}{\rightarrow}
\newcommand{\Hom}{{\operatorname{Hom}}}
\newcommand{\prob}{{\operatorname{prob}}}
\newcommand{\arith}{{\operatorname{arith}}}
\newcommand{\Sc}{{\mathcal{G}}}
\newcommand{\Cay}{{\mathcal{C}}}
\newcommand{\Aut}{{\operatorname{Aut}}}
\newcommand{\Sym}{{\operatorname{Sym}}}
\newcommand{\Alt}{{\operatorname{Alt}}}
\newcommand{\Level}{{\operatorname{Level}}}
\newcommand{\girth}{{\operatorname{girth}}}
\newcommand{\PSL}{{\operatorname{PSL}}}
\newcommand{\SL}{{\operatorname{SL}}}
\newcommand{\defeq}{\stackrel{\operatorname{def}}{=}}
\newcommand{\norm}[1]{\left\Vert#1\right\Vert}
\newcommand{\abs}[1]{\left\vert#1\right\vert}
\newcommand{\Supp}{{\operatorname{Supp}}}
\newcommand{\Spec}{{\operatorname{Spec}}}
\begin{document}
\bibliographystyle{alpha}
\maketitle

\begin{abstract}
The term {\it{strong approximation}} is used to describe phenomena where an arithmetic group as well as all of its Zariski dense subgroups have a {\it{large image}} in the congruence quotients. We exhibit analogues of such phenomena in a probabilistic, rather than arithmetic, setting.

Let $T$ be the binary rooted tree, $\Aut(T)$ its automorphism group. To a given 
$m$-tuple $\underline{a} = \{a_1,a_2, \ldots, a_m\} \in \Aut(T)^m$
we associate a tower of $2m$-regular Schreier graphs 
$$\ldots \arrow X_n \arrow X_{n-1} \arrow \ldots \arrow X_0.$$
The vertices of $X_n$ are the $n^{th}$ level of the tree and two such $x,y \in X_n$ 
are connected by an edge if $y = x^{a_i}$ or if $x = y^{a_i}$
for some $i$. 

When $\{a_i\} \subset \Aut(T)$ are independent Haar-random elements we retrieve the standard model for iterated random $2$-lifts studied in \cite{BL:Expanding_on_Trees},\cite{AL:Random_lifts1},\cite{AL:Random_lifts2}. If $\underline{w} = \{w_1,w_2, \ldots,w_l\} \subset F_m$ are words in the free group, the random substitutions $\w(\a) := \{w_1(\a), \ldots, w_l(\a)\}$ give rise to new models for random towers of $2l$-regular graphs. 
$$\ldots \arrow Y_n \arrow Y_{n-1} \arrow \ldots \arrow Y_0.$$
\medskip
\noindent {\bf{Theorem A.}}
With the above notation, the following hold almost surely, whenever $\Delta := \langle \underline{w} \rangle$ is a non-cyclic subgroup of $F_m$:
\begin{itemize}
\item The graphs $Y_n$ have a bounded number of connected components,
\item these connected components form a family of expander graphs.
\item the closure $\overline{\Delta}$ has positive Hausdorff dimension as a subgroup of the (metric) group $\Aut(T)$. 
\end{itemize}
Some of this is generalized to more general trees.
\end{abstract}

\section{Introduction}
\subsection{Strong approximation from arithmetic to probability.}
The term {\it{strong approximation}} is used to describe phenomena where an arithmetic group as well as all of its Zariski dense subgroups have a {\it{large image}} in the congruence quotients of the group. For example consider the group $\Gamma = \Gamma_{\arith} \defeq \PSL_2(\Z)$, with its congruence maps $\Phi_N: \Gamma \arrow \PSL_2(\Z/N\Z)$. The classical strong approximation theorem says that $\Gamma$ maps onto all of its congruence quotients. In other words, if $\Gamma = \langle \a \rangle$ then the Cayley graphs $\C_N(\a) \defeq \Cay\left(\PSL_2(\Z/N\Z),\Phi_N(\a)\right)$ are connected. A theorem of Lubotzky-Phillips-Sarnak shows that these graphs are connected in a very strong way: they form a collection of expander graphs. 

\begin{definition} \label{def:expanders} (Expander graphs)
A regular graph $X$ is called an {\it{$\alpha$-expander}} if 
$$h(X) \defeq \inf_{A \subset X} \frac{e(A, \overline{A})}{\min \left\{|A|,|\overline{A}|\right\}} \ge \alpha,$$
A family of regular graphs is called an {\it{expander family}} if they are all $\alpha$-expanders for some $\alpha > 0$. The quantity $h(X)$ defined above is known as the {\it Cheeger constant} of the graph. 
\end{definition}

 A much more modern question studies the image $\Phi_N(\Delta)$, where $\Delta = \langle \w \rangle = \langle w_1,w_2, \ldots, w_l \rangle < \Gamma$ is Zariski dense (as a subgroup of $\SL_2(\C)$). In the specific setting of $\PSL_2$ this only means that $\Delta$ is non cyclic.  The strong approximation theorem for linear groups of Nori and Weisfeiler shows the existence of a bound $M = M(\Delta)$ on the number of connected components for the Cayley graphs $\Cay_N(\w)$. Expansion was established only very recently, and for a limited set of congruence maps. A recent theorem of Bourgain and Gamburd \cite{BG:Uniform_expansion_Cayley_graphs} shows that the graphs $C_p(\w)$, where $p$ ranges over the prime numbers, form a family of expanders whenever $\Delta = \langle \w \rangle < \PSL_2(\Z)$ is Zariski dense. We elaborate on additional results of Bourgain and Gamburd in section \ref{sec:combination}

This paper exhibits analogues to all of the above strong approximation type theorems in the probabilistic setting described below.

\subsection{Random towers of graphs}
Let $T = T_d$ be a $d$-ary rooted tree, $\Aut(T)$ its automorphism group.  We replace the arithmetic group by the group $\Gamma = \Gamma_{\prob} \defeq \langle a_1,a_2,\ldots,a_m\rangle < \Aut(T)$ generated by $m$- independent Haar-random elements; congruence maps are replaced by the natural homomorphisms $\Psi_n: \Aut(T) \arrow \Aut_n(T)$ onto the automorphism groups of the truncated trees; finally instead of Zariski dense subgroups I consider all non-cyclic subgroups of $\Gamma$. 

As defined in the abstract we denote by $X_n = \Sc(\Gamma, \a,L(n))$ the Schreier graphs coming from the action of $\Gamma$ on the $n^{th}$ level of the tree - $L(n)$. For a finite set $\w \subset F_m^{l}$ we denote by $Y_n(\w) = \Sc(\Delta, \w,L(n))$ the Schreier graphs coming from the action of the subgroup $\Delta = \langle w_1(\a), \ldots w_l(\a) \rangle < \Gamma < \Aut(T)$ which is generated by the random substitution in these words. We will often refer to these graphs also as $Y_n(\Delta)$ or, when the group is understood, just $Y_n$. This abuse of notation indicates that we are interested in properties of these graphs that depend only on the group $\Delta$ rather than on a specific choice of a finite generating set.  The graphs $Y_n$ form a {\it tower}; they come with covering maps
$$\ldots \arrow Y_{n+1}(\Delta) \stackrel{p_{n,n+1}}{\arrow} Y_n(\Delta) \stackrel{p_{n-1,n}}{\arrow} Y_{n-1}(\Delta) \arrow \ldots $$
Where the covering map $p_{n,n+1}: Y_{n+1} \arrow Y_{n}$ takes a vertex $x \in Y_{n+1} = L(n+1)$ to its unique ancestor in $Y_n = L(n)$. 

An important variation on the standard model of random graph covering, allows for a parameter: a transitive permutation group $H < \Sym(d)$ of degree $d$. In our terminology this amounts to replacing the group $\Aut(T_d)$ with the infinite iterated wreath product $W = W(H) \defeq H \wr H \wr H \ldots < \Aut(T_d)$ and $\Aut_n(T)$ with the $(n)$-fold iterated wreath product 
$$ W_n = W_n(H) \defeq \stackrel{n \text{ times}}{\overbrace{H \wr H \wr \ldots \wr H}}.$$
The group $W(H)$ admits a natural action on the tree $T = T_d$ and this action restricted to the $n^{th}$ level of the tree factors through the homomorphism $\Psi_n: W(H) \arrow W_n(H)$. We will adopt this more general notation, fixing throughout the paper a transitive permutation group $H$ of degree $d$.  Upon setting $H = \Sym(d)$ we retrieve the special case discussed so far $W = W(\Sym(d)) = \Aut(T_d)$. The groups $W(H)$ will be treated in greater detail in Section \ref{sec:W(H)} below.

\subsection{Bounded orbits and Expansion}
Assume that $H < \Sym(d)$ is a transitive permutation group, $W = W(H) < \Aut(T_d)$ the corresponding iterated wreath product, $\Gamma = \langle \a \rangle$ a subgroup generated by $m$ independent Haar-random elements of $W$. Finally define the Schreier graphs $X_n = \Sc(\Gamma,\a,L(n))$ and $Y_n(\Delta) = \Sc(\Delta,\w,L(n))$ for every finitely generated $\Delta = \langle \w \rangle < \Gamma$ as above. 

\begin{theorem} \label{thm:BddExp}Almost surely, for every every non cyclic finitely generated subgroup $\Delta < \Gamma$, the associated tower of Schreier graphs satisfies the following properties:
\begin{enumerate}
\item \label{itm:Bdd} The number of connected components of 
  $Y_n(\Delta)$ is bounded.
\item \label{itm:Exp} If $d=2$, the collection of connected components of the graphs 
  $Y_n(\Delta)$ form a family of expander graphs.
\end{enumerate}
\end{theorem}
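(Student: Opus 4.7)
The connected components of $Y_n(\Delta)$ are in bijection with the $\Delta$-orbits on $L(n)$, and the covering map $p_{n,n+1}$ induces a surjection on components. Hence the number of components is monotonically non-decreasing in $n$, and part~(\ref{itm:Bdd}) reduces to almost-sure stabilization, while part~(\ref{itm:Exp}) asks for a uniform spectral gap along the entire tower.

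\textbf{Bounded components.} I would condition on the $\sigma$-algebra $\mathcal{F}_n$ generated by the truncations $\Psi_n(\a)$. The randomness at level $n+1$ is carried by independent Haar-random permutations $\sigma_{j,v}\in H$, one for each generator $a_j$ and each vertex $v\in L(n)$, describing how $a_j$ permutes the children of $v$. For a $\Delta$-orbit $O\subseteq L(n)$, I expect the conditional probability that $O$ fails to lift to a single $\Delta$-orbit on $p_{n,n+1}^{-1}(O)$ to decay like $|O|^{-\alpha}$ for some absolute $\alpha>0$, whenever the image of $\Delta$ in $\Sym(O)$ is non-cyclic and transitive. This is essentially a statement about random $H$-extensions of a non-cyclic transitive subgroup of $\Sym(O)$. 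Since orbits of a non-cyclic group on $L(n)$ grow and $|L(n)|=d^n$, summing these bounds geometrically in $n$ and applying Borel--Cantelli forces stabilization.

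\textbf{Expansion in the binary case.} For $d=2$ I would invoke the Bilu--Linial spectral decomposition for random $2$-lifts: the non-trivial eigenvalues of $Y_{n+1}(\Delta)$ restricted to a single orbit split into the eigenvalues of $Y_n(\Delta)$ on the base orbit and the spectrum of a random $\pm1$-signed adjacency matrix of the base. The concentration estimates of \cite{BL:Expanding_on_Trees,AL:Random_lifts1,AL:Random_lifts2} place these new eigenvalues well below the degree with high probability. Combining this level-by-level control with the orbit-size growth from the previous paragraph and summing the corresponding tail probabilities should yield a common Cheeger constant for all orbits appearing at every level.

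\textbf{Main obstacle.} The principal difficulty is uniformity over the uncountable family of non-cyclic $\Delta\le\Gamma$; a naïve union bound is hopeless. I would reduce to a countable collection in two steps. First, every non-cyclic subgroup contains a $2$-generator non-cyclic subgroup, so it suffices to consider $l=2$ and let $(w_1,w_2)$ range over $F_m\times F_m$. Second, the orbit structure and the eigenvalues at level $n$ depend only on the image of $(w_1(\a),w_2(\a))$ in $W_n(H)$, which is a finite set for each $n$. A diagonal Borel--Cantelli over $n$ and over these finite configurations, driven by the quantitative estimates above, should then upgrade the per-subgroup statements to the simultaneous conclusion for every non-cyclic $\Delta$.
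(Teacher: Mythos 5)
Your reduction to countably many two-generated subgroups and the observation that the component count is monotone along the tower are both correct and match the paper. But the core of your argument has a genuine gap: in both the component-counting step and the spectral step you treat the lift $Y_{n+1}(\Delta)\arrow Y_n(\Delta)$ as if it were driven by fresh, independent randomness attached to the edges of $Y_n(\Delta)$ --- ``independent Haar-random permutations $\sigma_{j,v}$'' in the first case, a uniformly random $\pm1$-signing in the second. That is true for the graphs $X_n$ of the ambient group $\Gamma$, but it fails for $\Delta=\langle w_1(\a),w_2(\a)\rangle$: the local cocycle of $w_i(\a)$ at a vertex $v$ is the \emph{product} of the local permutations of the letters of $w_i$ along the path $\iota_n(e(v,w_i))\subset X_n$. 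These paths overlap, a single path can traverse the same geometric edge of $X_n$ several times, and distinct vertices of the same orbit share edges, so the resulting permutations (and signs) are neither uniform nor independent. Your claimed conditional estimate that an orbit fails to merge with probability $|O|^{-\alpha}$, and your invocation of the Bilu--Linial concentration for the signed adjacency matrix of $Y_n(\Delta)$, both presuppose exactly the independence that is missing. Resolving this dependence is the entire content of the paper's main technical theorem: one finds a level $N$, orbit representatives $v(Z)$, and loops $\alpha_k(Z)\in\Delta_{v(Z)}$ whose images in $X_N$ each cross a designated geometric edge exactly once and avoid the other designated edges; the Haar factor carried by that single edge then randomizes the whole product, making the local actions $\beta(\alpha_k,v)$ genuinely independent and Haar-distributed. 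Only after this step can one quote Ab\'ert--Vir\'ag (for bounded components) and Bilu--Linial (for expansion) as black boxes above level $N$.

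There is a second, independent problem with your expansion step even if the signings were uniform: the Bilu--Linial bound $C\sqrt{2m\log^3(2m)}$ on the new eigenvalues only falls below the degree $2m$ when $m$ is large, whereas $\Delta$ may have only two generators (degree $4$). The paper circumvents this by first proving that expansion is independent of the generating set, then enlarging the generating set of $\Delta$ by $K$ elements $\alpha_k(v)$ per vertex of $L(N)$ (with $K$ large), obtained again from the technical theorem, and finally combining the expansion of the fibers over each $v\in L(N)$ with a separate combinatorial argument for the finite quotient graph below level $N$. Your proposal has no substitute for either of these ingredients. (A minor point: the family of finitely generated subgroups of $\Gamma$ is countable, so no diagonalization over finite configurations is needed for the union bound; the fixed-$\Delta$ statement is where all the work lies.)
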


\subsection{Positive Hausdorff dimension} 
By now the reader probably asked herself why, in the arithmetic setting, we work with Cayley graphs, while the probabilistic version, Theorem \ref{thm:BddExp} above is stated for Schreier graphs. Indeed one could ask about connectedness and expansion of the Cayley graphs $C_n(\w) = \Cay(W_n(H), \w)$. Since there are natural covering maps $C_n(\w) \arrow Y_n(\w)$ such statements would be stronger than the statements that we prove about the Schreier graphs. Unfortunately, in general, the Cayley graphs even fail to have a bounded number of connected components, even for the group $\Gamma$ itself. We explain more on the reasons for this in Section \ref{sec:Cay_vs_Sc}. The following statement, stated in terms of Hausdorf dimension, is the next best thing that one could hope for.  			
\begin{theorem} \label{thm:H-dim} Assume that $H = \Z/p\Z$. Then almost surely $\dim_H(\overline{\Delta}) > 0$ for every non cyclic subgroup $\Delta < \Gamma$. Explicitly this means that for every such $\Delta$ there exists a positive number $h = h(\Delta) > 0$ such that
$$\liminf \frac{|\Psi_n(\Delta)|}{|W_n(T)|^h} \ge 1.$$
Where $W_n(T) = H \wr H \wr \ldots \wr H$ is the $n$-fold iterated wreath product. 
\end{theorem}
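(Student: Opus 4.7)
The plan is to realize the Hausdorff dimension of $\overline{\Delta}$ in the pro-$p$ group $W = W(\Z/p\Z)$ via the level filtration. Setting $\Delta^{(k)} \defeq \Delta \cap \ker(W \arrow W_k)$ and $d_k \defeq \log_p|\Delta^{(k)}/\Delta^{(k+1)}|$, we have $\log_p|\Psi_n(\Delta)| = \sum_{k<n} d_k$ and $\log_p|W_n| = e_n \defeq (p^n-1)/(p-1)$. Since $e_n$ is dominated by its top term $p^{n-1}/(p-1)$, positive Hausdorff dimension reduces to exhibiting a set $S \subset \N$ of positive density on which $d_k \geq c\,p^k$. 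The crucial identification is that $\Delta^{(k)}/\Delta^{(k+1)}$ embeds $\Psi_k(\Delta)$-equivariantly into the permutation module $\Fe_p[L(k)]$, so $d_k$ is the $\Fe_p$-dimension of this invariant subspace.

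By Theorem \ref{thm:BddExp}, the number of $\Delta$-orbits on each $L(k)$ is bounded by $M = M(\Delta)$, so every level contains a $\Delta$-orbit of size at least $p^k/M$. My goal is then to produce, for each $k$ in a set of positive density, an element $g_k \in \Delta^{(k)} \setminus \Delta^{(k+1)}$ whose local action at level $k$ is supported on a single vertex $v_k \in L(k)$. Given such $g_k$, its $\Delta$-conjugates lie in $\Delta^{(k)}$ and have supports that are single vertices in the orbit of $v_k$; these are delta-functions on pairwise distinct vertices, hence linearly independent elements of $\Delta^{(k)}/\Delta^{(k+1)} \subset \Fe_p[L(k)]$. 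This yields $d_k \geq p^k/M$, summing to $\sum_{k<n} d_k \geq (\text{positive density}) \cdot p^n/M$ and hence $\dim_H\overline{\Delta} \geq h > 0$. Since the set of tuples $\w \in \bigcup_l F_m^l$ with $\langle\w\rangle$ non-cyclic is countable, a union bound reduces the ``for every non-cyclic $\Delta$'' clause to a statement for one fixed $\w$ at a time.

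The heart of the argument is the construction of $g_k$. For non-cyclic $\Delta = \langle w_1(\a),\ldots,w_l(\a)\rangle$ there are two non-commuting elements, and iterated commutators of the form $[w_{i_1}(\a), [w_{i_2}(\a), [\ldots, w_{i_r}(\a)]]]$ are generically non-trivial but sit ever deeper inside the level filtration as $r$ grows. Conjugating such a commutator by a word in the generators that moves one subtree ``far from'' the others concentrates its support. The randomness of $\a$ is used to control, probabilistically, at which level $k$ a given commutator word first becomes non-trivial, and to guarantee that, at a positive-density set of levels, one can choose the combinatorial parameters $(r, i_1,\ldots,i_r)$ together with a conjugator in $\Delta$ so as to land on an element of $\Delta^{(k)}$ whose level-$k$ support is a single vertex. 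An equivalent reformulation is to find infinitely many $v_0 \in \bigcup_k L(k)$ for which the rigid stabilizer $\operatorname{rist}_\Delta(v_0) \neq \{e\}$: the conjugation-propagation argument above then produces the required $\geq p^{k_0}/M$ linearly independent elements in $\Delta^{(k_0)}/\Delta^{(k_0+1)}$ from the $\Delta$-orbit of $v_0$.

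The main obstacle is precisely this simultaneous control of commutator depth and support at a positive density of levels $k$. Two difficulties intertwine: first, one must verify that the iterated commutators remain non-trivial in the quotient $W_n$ for the correct range of $n$ — this is where non-cyclicity enters, guaranteeing a non-trivial seed, together with the fact that for Haar-random tree automorphisms the depth at which a word becomes trivial is almost surely unbounded and distributed controllably; second, one must analyse the effect of conjugation on the portrait of the commutator to produce single-vertex support, which requires a probabilistic study of how independent Haar-random automorphisms permute the subtrees. Establishing these two ingredients uniformly over all levels $k \in S$ — rather than at one level — is the principal technical hurdle, and must be done by a Borel-Cantelli style argument applied to the random portraits $\Psi_k(a_1),\ldots,\Psi_k(a_m)$.
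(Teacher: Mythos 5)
Your framework---the level filtration $\Delta^{(k)} = \Delta \cap \ker(W \arrow W_k)$, the identification of $\Delta^{(k)}/\Delta^{(k+1)}$ with a $\Psi_k(\Delta)$-submodule of the permutation module $\Fe_p[L(k)]$, and the use of the bounded-orbit theorem to turn one single-support element into $\ge p^k/M$ linearly independent ones---is coherent (note that the fact that conjugation acts by permuting coordinates uses that $H=\Z/p\Z$ is abelian). But there are two genuine gaps. First, your quantitative reduction is wrong: since $\log_p|W_n| = (p^n-1)/(p-1)$ grows geometrically, the $\liminf$ defining $\dim_H$ is dominated by the top boundedly many levels. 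Because $d_k \le p^k$ always, one has $\sum_{k<n-j} d_k \le p^{n-j}/(p-1)$, so to get $\liminf_n \bigl(\sum_{k<n} d_k\bigr)/\log_p|W_n| \ge h$ you must produce $d_k \ge c\,p^k$ for some $k$ in \emph{every} window $[n-j,n)$ of length $j=j(h)$ bounded in terms of $h$, i.e.\ on a set with bounded gaps. A set of positive density can have unbounded gaps, and along a subsequence of $n$'s placed just after such a gap your lower bound on the ratio tends to $0$. So ``positive density'' must be upgraded to ``bounded gaps,'' a strictly harder target for your construction.

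Second, and more seriously, the heart of your proof---producing, for such a set of levels $k$, an element $g_k \in \Delta^{(k)}\setminus\Delta^{(k+1)}$ whose level-$k$ portrait is supported on a single vertex---is never actually carried out; you defer it to an unspecified ``Borel--Cantelli style argument'' without defining the events or estimating their probabilities. This is not a routine verification: $\Delta$ is a.s.\ free (Bhattacharjee), and free groups admit many actions on rooted trees with no locally supported elements at any level, so the claim must exploit the randomness of $\a$ in an essential, uniform-in-$k$ way; moreover your asserted reformulation via $\operatorname{rist}_\Delta(v_0)\ne\{e\}$ is not equivalent to what you need (single-vertex support of the level-$k$ portrait is much weaker than membership in a rigid stabilizer, and neither implication is justified). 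The paper avoids all of this by a different route: the main technical theorem (``resolution of dependence'') produces a vertex $v$ at some level $N$ and elements $\alpha_1,\dots,\alpha_K\in\Delta_v$ whose local actions $\beta(\alpha_k,v)$ are $K$ \emph{independent Haar-random} elements of $W$; the Ab\'ert--Vir\'ag full-dimension theorem for $K=3$ then gives $\dim_H$ of the local group equal to $1$ a.s., and a short computation yields $\dim_H\overline{\Delta}\ge \dim_H(\beta(A,v))/p^N>0$. To salvage your approach you would essentially have to re-prove a statement of comparable strength level by level.
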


\subsection{Resolving dependence.}
Iterated random graph covers (i.e. the case $\Delta = \Gamma$) were studied by many people and in particular all of the above strong approximation type theorems were already known in this case. Ab\'{e}rt and Vir\'{a}g \cite{AV-dimension_theory} show that the number of connected components of $X_n$ is bounded. In the specific case where $d=p$ is prime, $H = \Z/p\Z$ and $m \ge 3$ they prove that the Hausdorff dimension of the group $\overline{\Gamma}$ is almost surely one. In other words, when $\Delta = \Gamma$, they prove that the conclusion of Theorem \ref{thm:H-dim} holds almost surely  with $h = 1 - \epsilon$.  For the binary tree, Bilu and Linial \cite{BL:Expanding_on_Trees} prove that the connected components of $X_n$ almost surely form a family of expander graphs. The novelty of the current paper is that we establish similar results not for the group $\Gamma$ itself but rather for every non-cyclic subgroup $\Delta < \Gamma$ thereby imitating the treatment of Zariski dense subgroups in the arithmetic case.

The main problem that we face is that the action of $\Delta$ is not as nice as that of  $\Gamma$. Group theoretically it was proved by Bhattacharjee in \cite{Bhattacharjee:free} that $\Gamma$, and therefore also $\Delta$, are almost surely free groups. But the free generators of $\Delta$ are no longer distributed according to Haar measure. Even worse, these generators need not be independent. 
To deal with this situation we develop a general method for ``resolution of dependencies''. The main technical theorem \ref{thm:technical} provides a general mechanism for taking random subgroups generated by non-uniform and highly dependent elements such as $\Delta$, and reducing questions about them to questions about ``nice'' random subgroups that are generated by independent Haar-random generators such as $\Gamma$. Theorems \ref{thm:BddExp} and \ref{thm:H-dim} are then deduced from the main technical theorem, combined with known results of Ab\'{e}rt-Vir\'{a}g and of Bilu-Linial that were mentioned above.

\subsection{Notation and arrangement of this paper} 
The theorems, definitions are not numbered themselves but rather refered to by the number of the appropriate section. For example Theorem \ref{thm:BddExp} is the (unique) theorem appearing in Section \ref{thm:BddExp}. 

After introducing all the notation we state the main technical Theorem \ref{thm:technical} in Section \ref{sec:deducing} we use this theorem to deduce Theorems \ref{thm:BddExp}(\ref{itm:Bdd}) and \ref{thm:H-dim}. Section \ref{sec:proof-main-technical} is dedicated to the proof of the main technical theorem. In Section \ref{sec:expander} we prove Theorem \ref{thm:BddExp}(\ref{itm:Exp}), namely the expansion of Schreier graphs. The last section \ref{sec:SA} examines more closely the analogy with strong approximation for arithmetic groups.

\subsection{Acknowledgments} I thank Shlomo Hoory and B{\'a}lint Vir{\'a}g who pointed out
that the proof of Bilu and Linial actually yields the precise statement needed in this paper. I
thank Mikl{\'o}s Ab{\'e}rt for many stimulating talks on random group actions, and for explaining to me many of the details of the paper \cite{AV-dimension_theory}. Much of this paper was written in Geneva, I am very thankful to the Swiss science foundation that enabled that visit and to the math department at Geneva for their warm hospitality. This material is based on research done at the institute for advanced studies and supported by U.S. National Science Foundation under agreement DMS-0111298. The work was also supported by ISF grant 888/07 and BSF grant 2006-222. 

\section{Background and Notation} \label{sec:notation}
\subsection{Graphs}
A graph $X$ consists of two sets, a set of vertices also denoted by $X$ and a set of edges denoted $EX$.
Each edge $e \in EX$ is incident to two (possibly identical) vertices $(o(e),t(e)) \in X^2$. Each edge $e$ has an inverse edge $\overline{e}$ such that $\overline{\overline{e}} = e, \ o(\overline{e}) = t(e), \ t(\overline{e}) = o(e)$. A {\it geometric edge $[e]$} is a pair of opposite directed edges $[e] \defeq \{e, \overline{e}\}$.

\subsection{Rooted trees} Let $T = T_d$ be the infinite rooted $d$-ary tree. The vertices of $T$
correspond in a natural way to finite $d$-adic expansions.  The empty expansion
corresponds to the base vertex. A vertex $v$ is a descendant of $w$ if and only if the expansion
of $w$ is a prefix of the expansion corresponding to $v$. The subset $L(n) \subset T$ of vertices
corresponding to expansions of length $n$ is called the $n^{th}$ level of the tree. The level of a
given vertex $v$, denoted $\Level(v)$, is just the length of its expansion. Given a vertex $v$ we
denote by $T_v$ the subtree containing $v$ and all of its descendants. $T_v$ is isomorphic to $T$,
in fact there is a natural isomorphism between these trees $\tau_v:T_v \arrow T$ given by the
truncation of the first $\Level(v)$ symbols of the expansion. It will be convenient to denote these functions from the right $w \in T_v \mapsto w^{\tau_v} \in T$.

\subsection{Group actions} Let $G$ be any group acting (from the right) on $T$, denote by $G_v = \{g
\in G \ |  v^{g} = v \}$ the stabilizer of the vertex $v$ and by $v^G = \{ v^{g} \ | \ g \in G\}$
the orbit, $G^{[n]} = \cap_{v \in L(n)} G_v$ the pointwise stabilizer of the $n^{th}$ level and
$G_n = G/G^{[n]}$. The action of $G$ on the first $n$-levels of the tree factors through the
homomorphism $\Psi_n: G \arrow G_n$. In fact $G^{[n]}$ is precisely the kernel of this action. In
particular the map
\begin{eqnarray*}
\Psi_1: G & \arrow & G_1 < \Sym(\{0,1,\ldots,d-1\}) \\
g & \mapsto & \overline{g} \defeq \Psi_1(g)
\end{eqnarray*}
captures the action of elements of the first level of the tree.

\subsection{The local cocycle} The canonical identification of every subtree of the from $T_v$ with $T$
enables us to define the {\it local cocycle} of an automorphism $g$ at a vertex $v$
\begin{eqnarray*} \label{eqn:local_action}
\beta: Aut(T) \times T & \arrow & Aut(T) \\
(g,v) & \mapsto & \beta(g,v) \defeq \tau_{v}^{-1} g \tau_{v^g}
\end{eqnarray*}
This map satisfies the cocycle equality $\beta(gh,v) = \beta(g,v) \beta(h,v^g)$ and in particular
the map $g \mapsto \beta(g,v)$ becomes a homomorphism when it is restricted to $G_v$. The image of
this map $\beta(G_v,v) = \{\beta(g,v) | g \in G_v\} < \Aut(T)$ will play an important role in our
discussion. The {\it $1$-local cocycle} $Aut(T) \times T \arrow \Sym(d)$ is
given by
\begin{eqnarray*} \label{eqn:local_action_1}
\overline{\beta} : Aut(T) \times T & \arrow & \Sym(d) \\
(g,v) & \mapsto & \overline{\beta(g,v)} = \Psi_1(\beta(g,v)).
\end{eqnarray*}
\subsection{The groups $\bf{W(H)}$}  \label{sec:W(H)} Throughout this paper we fix a transitive permutation group $H < \Sym(\{0,1,\ldots,d-1\})$ of degree $d$.
Consider the group $W = W(H) = \{g \in \Aut(T) | \{\overline{\beta(g,v)} \in H \ \forall v \in T \}$. It is easy
to verify, using the cocycle condition, that $W(H)$ is a group. In fact $W(H)$ is naturally
isomorphic to the infinite iterated wreath product of $H$ acting on $T$. In particular
$W(\Sym(d))$ is the full automorphism group of $T$. The group $W(H)$ has the
nice property that $\beta(g,v) \in W(H) \ \forall g \in W(H), v \in T$.

$W = W(H)$ is a compact (profinite) group and therefore admits a unique probability Haar measure. For any $a \in W(H)$ we set $v(a) = \max_n\{\Psi_n(a) = 1 \} \in \N \cup \{\infty\}$. The group $W(H)$ admits a natural invariant ultrametric which is compatible with the topology on the group
\begin{equation} \label{eqn:metric}
\delta(a,b) = d^{-v(a^{-1}b)}.
\end{equation}

\subsection{The probability space} Let $F = F_m = \langle x_1,x_2,\ldots,x_m \rangle$ be a free group on $m$ generators. Our probability space will be $\Hom(F_m,W(H)) \cong W(H)^{m}$ endowed with the product Haar measure. In other words we consider random actions of $F$ on $T$ given by a choice of $m$ independent Haar-random elements of $W$.

\subsection{Hausdorff dimension} Viewing $W(H)$ as a metric space, with the metric given by equation \ref{eqn:metric} it makes sense to talk about the Hausdorff dimension of a closed subset and in particular of a closed subgroup $G < W(H)$. It was shown by Barnea and Shalev \cite{BS:Hausdorff} that the Hausdorff dimension of a closed subgroup $G < W(H)$ is given by the formula
\begin{equation}  \label{eqn:Hausdorff}
\dim_H G = \liminf \gamma_n (G).
\end{equation}
Where $\gamma_n (G)$, the {\it density sequence}, is given by $\gamma_n(G) = \log
|G_n|/\log|W_n|$. For convenience of notation we will use Equation (\ref{eqn:Hausdorff}) as the definition of Hausdorff dimension even for subgroups that are not closed. Note that this does not really extended the scope of our discussion because the quantity
$\dim_H G$ thus defined, depends only on the closure $\overline{G}$.

\subsection{Schreier graphs} \label{sec:Schreier}
Let $\Gamma \curvearrowright X$ be a (right) group action on a set and $S \subset \Gamma$ a finite symmetric subset of $\Gamma$.  The {\it Schreier graph} $\Sc(\Gamma,S,X)$ is a directed graph with vertex set $X$ and edge set $X \times S$. Where the graph structure is given by the following maps:
$$\overline{(x,s)} = (x^s,s^{-1}), \qquad o((x,s)) = x, \qquad t((x,s)) = x^s.$$
The projection on the second factor $X \times S \arrow S$ is referred to as the natural edge labeling of the Schreier graph. If $S \subset \Gamma$ is a finite subset that is not necessarily symmetric, we define $\Sc(\Gamma,S,X) \defeq \Sc(\Gamma,S \cup S^{-1},X)$. Note that in the definition of a Schreier graph we do not require that $S$ be a generating set of $\Gamma$. The Schreier graph $\Sc(\Gamma,S,X)$ will be connected if and only if $\langle S \rangle < \Gamma$ is a transitive subgroup.

The {\it{Cayley graph}} of a group with respect to a finite set of generators, is defined as the Schreier graph coming from the right regular action of the group on itself $\Cay(G,S) = \Sc(G,S,G)$.

\section{The main technical result and its applications} \label{sec:deducing}
Both Theorems \ref{thm:BddExp} and \ref{thm:H-dim} follow from known results due to Ab{\'e}rt-Vir{\'a}g \cite{AV-dimension_theory} and Bilu-Linial \cite{BL:Expanding_on_Trees} concerning actions of groups generated by independent Haar-random elements combined with our main technical theorem stated in the following section:
\subsection{A theorem about resolution of independence}
\begin{theorem} \label{thm:technical}
Let $\Delta < F$ be a non-cyclic subgroup and $K \in \N$. Then there exists a random variable $N
= N(K) \in \N$. And for every $\Delta$ orbit $Z \in L(N)/\Delta$ there exist random variables as
follows:
\begin{itemize}
\item A random vertex $v=v(Z) \in Z$,
\item $K$ random group elements $\{\alpha_k = \alpha_{k}(Z) \in \Delta_{v} \ | \ 1 \le k \le K \}.$
\end{itemize}
Such that the automorphisms $\beta(\alpha_{k}(Z),v(Z)) = \beta(\alpha_k,v)$ are $K$ independent
Haar-random elements of $W$.
\end{theorem}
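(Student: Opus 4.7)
The plan is to exploit the wreath product decomposition of Haar measure on $W$, reducing the statement to a combinatorial problem about edge-disjoint, edge-simple closed walks in the $\Gamma$-Schreier graph labelled by elements of $\Delta$. Concretely, conditional on the level-$N$ action $\Psi_N(\a)\in W_N^m$, the $md^N$ local cocycles
\[
\mathcal{C}_N \;=\; \{\beta(a_i,u) \ :\ 1\le i\le m,\ u\in L(N)\}
\]
form an iid Haar-random family in $W$. Bi-invariance of Haar on the compact group $W$ implies that any finite product of pairwise distinct members of $\mathcal{C}_N$ (each used at most once, with arbitrary signs) is Haar-random in $W$, and that two such products indexed by disjoint sub-collections of $\mathcal{C}_N$ are independent; conditional iid Haar-randomness yields unconditional iid Haar-randomness.

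Expanding an element $\gamma\in\Delta$ as a word in the letters $a_i^{\pm 1}$ makes it trace a walk $p(\gamma)$ in $X_N=\Sc(\Gamma,\a,L(N))$ from $v$, and the cocycle $\beta(\gamma,v)$ equals the ordered product of the local cocycles $\beta(a_i,u)^{\pm 1}$ of the generators at the vertices traversed by $p(\gamma)$. Thus it suffices to construct, measurably from $\Psi_N(\a)$, a vertex $v\in Z$ and $K$ elements $\alpha_k\in\Delta_v$ whose traces $p_k=p(\alpha_k)$ in $X_N$ are (i) closed at $v$, (ii) each edge-simple with respect to geometric edges of $X_N$, and (iii) pairwise edge-disjoint. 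The label sequence of each $p_k$ must moreover spell a word in the $w_j^{\pm 1}$, so that $\alpha_k$ really belongs to $\Delta$.

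Since $\Delta<F_m$ is non-cyclic, it is a free group of rank at least two; pick $g_1,g_2\in\Delta$ generating a rank-two free subgroup $\Delta'$. Because $\Delta'$-orbits refine $\Delta$-orbits and $\Delta'_v\subseteq\Delta_v$, carrying out the construction for $\Delta'$ suffices, so I henceforth assume $\Delta=\langle g_1,g_2\rangle$. Almost surely the $\Delta$-orbits on $L(N)$ grow without bound as $N\to\infty$, and on each orbit $Z\subseteq L(N)$ the $\Delta$-Schreier graph $Y_N$ has first Betti number $|Z|+1\to\infty$; hence $Y_N|_Z$ contains many independent cycles through any chosen basepoint.

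The technical heart, and the main obstacle, is the following combinatorial claim: almost surely there is a finite $N=N(K,\a)$ such that for every $\Delta$-orbit $Z\subseteq L(N)$ one can choose $v\in Z$ and $K$ closed reduced walks $p_1,\dots,p_K$ at $v$ in $Y_N$ whose lifts to $X_N$ (each $w_j$-edge inflated to its defining length-$|w_j|$ path in $X_N$) are edge-simple and pairwise edge-disjoint. A natural attack is to fix a spanning tree of $Z$ in $Y_N$ and form the canonical basis of $\pi_1(Z,v)$ indexed by its non-tree edges; since the number of such generators grows linearly with $|Z|\to\infty$ while each inflated lift uses only a bounded number of geometric edges of $X_N$ (bounded by the lengths of the words $w_j$), a pigeonhole or matroidal argument should extract $K$ cycles with pairwise disjoint inflated edge-sets. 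Setting $\alpha_k(Z)$ to be the element of $\Delta_v$ traced by $p_k$, the opening paragraphs then deliver $K$ independent Haar-random elements $\beta(\alpha_k,v)\in W$. The delicate point is to run this selection uniformly over all orbits $Z$ (of which there can be many) while simultaneously controlling edge-simplicity of each lift, edge-disjointness across lifts, and $N$ as a measurable function of $\a$.
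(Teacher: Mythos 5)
Your probabilistic setup is sound and matches the paper's: conditionally on $\Psi_N(\a)$ the level-$N$ local cocycles are iid Haar, and a walk in $X_N$ turns $\beta(\alpha_k,v)$ into an ordered product of these. But the combinatorial problem you reduce to is both too strong and, as stated, unsolvable. You require $K$ closed walks, all based at the \emph{same} vertex $v$, that are edge-simple and pairwise edge-disjoint (in geometric edges). After reducing to rank two, $Y_N$ is $4$-regular, so $v$ has only $4$ edge-ends; every nontrivial edge-simple closed walk at $v$ consumes at least $2$ of them, and pairwise edge-disjoint walks consume disjoint sets of edge-ends. Hence at most $2$ such walks can coexist, and your requirement fails outright for every $K\ge 3$. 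Your proposed construction does not rescue this: the canonical $\pi_1$-basis attached to a spanning tree consists of loops of the form (tree path)$\cdot f\cdot$(tree path)$^{-1}$, which have length comparable to $\mathrm{diam}(Z)$, not ``a bounded number of geometric edges,'' and all of them share the tree edges near $v$, so no pigeonhole argument extracts $K$ pairwise disjoint ones.

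The paper's proof shows why the full-disjointness demand is unnecessary. It asks only that for each $k$ there be \emph{one} distinguished geometric edge $e_k\subset X_N$ traversed exactly once by $\iota_N(\alpha_k)$ and not at all by $\iota_N(\alpha_j)$, $j\ne k$; the loops themselves may overlap arbitrarily. Writing $\beta(\alpha_k,v)=\beta_k\gamma_k\delta_k$ with $\gamma_k$ the cocycle of the fresh edge, bi-invariance of Haar measure plus Fubini (Lemma~\ref{lem:prob_dist} in the paper) absorbs all the dependence hidden in the $\beta_k,\delta_k$. The genuinely hard step, entirely absent from your sketch, is then to delete the preimages $\zeta(E)$ of the chosen edges without disconnecting the component $Z$, uniformly over all components and all choices of $\w$. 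The paper does this in two stages: first choose well-separated edges at a level $N'$ so that no component of $Z\setminus\zeta(E')$ is a tree (using the girth/stability estimates of Corollary~\ref{cor:stable}, which you also need but never invoke to control backtracking and edge-repetition of the inflated lifts), and then pass to a deeper level $N$ so that every component of the complement grows under the covering $p_{N,N'}$, forcing connectivity by a counting argument against the covering degree. Without replacing your disjointness requirement by the single-fresh-edge condition and supplying this connectivity argument, the proof does not go through.
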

In plain words: given $K$ we can almost surely find a level $N$ and orbit representatives $v(Z) \in L(N)$ and subgroups $A(Z) := \langle \alpha_{k}(Z) \ | \ 1 \le k \le K \ \Delta_{v(Z)}\rangle$. In such a way that the action of the group $A(Z)$ on $T_{v(Z)}$ admits the distribution of a group generated by K independent Haar-random elements.
\begin{remark}
We do not claim here any independence between groups associated with two different orbits $A(Z), A(Z')$. Nevertheless, a posteriori, one can actually prove this stronger statement. It is a good exercise to go over the proof of Theorem \ref{thm:technical} below, assuming the validity of Theorem ~\ref{thm:BddExp}. A slight modifications of the argument yields $K M(\Delta)$ elements $\{\alpha_{k}(Z) \in \Delta_{v(Z)} \ | \ \ 1 \le k \le K, \ Z \in L(n)/\Delta \}$ that are jointly independent and each one is admits the distribution of a Haar-random element in its corresponding tree. Here $M(\Delta)$ denotes the bound on the number of orbits of $\Delta$ given by Theorem \ref{thm:BddExp}(\ref{itm:Bdd}). I do not know how to prove this stronger theorem without going over the argument twice. 
\end{remark}

\subsection{Bounded number of connected components} 
We will use the following theorems due to Ab\'{e}rt and Vir\'{a}g
\begin{theorem} \cite[Proposition 3.10]{AV-dimension_theory}
\label{AV:Bdd}
Let $H$ be any transitive permutation group, and $m \ge 2$. Then for almost every $\phi \in \Hom(F_m,W(H))$ the number of connected components of the graphs $X_n$ 
$$L(n) / \Psi_n(\phi(F_m))$$ is bounded. 
\end{theorem}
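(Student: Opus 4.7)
\medskip
\noindent\textbf{Proof proposal.} My plan is to reduce the theorem to a uniform-in-$n$ bound on $\E[c_n]$, where $c_n$ denotes the number of components of $X_n$, and then close that bound by a self-similar branching recursion. Since an orbit on $L(n+1)$ projects onto an orbit on $L(n)$, the sequence $c_n$ is non-decreasing, so once we produce a constant $B = B(H,m)$ with $\E[c_n] \le B$ for every $n$, monotone convergence forces $\sup_n c_n < \infty$ almost surely, which is the claim. To index the recursion let $c_n(r)$ and $E_n(r) = \E[c_n(r)]$ denote the same quantities when $m$ is replaced by an integer $r \ge 2$; the target is a uniform-in-$n$ bound $E_n(r) \le 1 + C\rho_0^r$ for appropriate $\rho_0 < 1$ and $C$ depending only on $H$.

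The heart of the argument is a self-similarity lemma. Using the wreath-product decomposition $W(H) = H \ltimes W(H)^d$, write $a_i = (\sigma_i;\, a_{i,0},\ldots, a_{i,d-1})$ with $\sigma_i$ uniform on $H$ and each $a_{i,j}$ Haar on $W(H)$, all jointly independent. For a level-one orbit $Z \subset L(1)$ of size $k$ with representative $v$, I claim that conditionally on the level-one orbit structure, the subgroup $\beta(\Gamma_v, v) < W(H)$ acts on $T_v \cong T_d$ as the group generated by $1 + k(m-1)$ i.i.d.\ Haar-random elements of $W(H)$, and the sub-actions attached to distinct level-one orbits are independent. The rank matches the Nielsen--Schreier index formula for a free subgroup of index $k$ in a free group of rank $m$; the Haar distribution follows from left- and conjugation-invariance of Haar measure applied to the $\beta$-images of an explicit Schreier basis of $\Gamma_v$, which are specific reduced words in the i.i.d.\ components $\{a_{i,j} : j \in Z\}$; and across-orbit independence follows from distinct orbits involving disjoint subfamilies of these components.

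Iterating the lemma yields the recursion
\[
E_{n+1}(m) \;=\; \E\!\left[\sum_{i=1}^{c_1(m)} E_n\bigl(1 + k_i(m-1)\bigr)\right],
\]
where $k_1,\ldots, k_{c_1(m)}$ are the random level-one orbit sizes. The second ingredient is an exponential decay estimate: a union bound over the finite family $\mathcal{I}$ of maximal intransitive subgroups of $H$ shows that the probability that $r$ uniform random elements of $H$ generate an intransitive subgroup is at most $C_0 \rho_0^r$, where $\rho_0 := \max\{|H'|/|H| : H' \in \mathcal{I}\} < 1$; in particular $E_1(r) - 1 \le (d-1) C_0 \rho_0^r$. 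The key observation driving the induction is that along the typical transitive path of the recursion the rank jumps from $r$ to $1 + d(r-1)$, so the iterated ranks $m^{(j)} = 1 + d^j(m-1)$ grow doubly exponentially in $j$, and hence $\sum_j \rho_0^{m^{(j)}}$ converges very rapidly. Collecting the rare intransitive contributions yields a bound of the form $\sup_n E_n(m) \le 1/(1 - (d-1) \sum_j \rho_0^{m^{(j)}})$ in every range of $m$ where the denominator is positive, and since $\rho_0 \le 1/2$ always (any proper subgroup has index at least $2$) positivity for large enough $m$ is automatic; smaller values of $m$ can be handled by pulling back through finitely many iterations of the recursion.

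The main obstacle is the self-similarity lemma, because the Schreier basis of $\Gamma_v$ produces specific non-linear words in the $a_{i,j}$ and we need the resulting $\beta$-images to be jointly Haar, not merely marginally so. This is essentially the $\Delta = \Gamma$ case of the paper's Theorem \ref{thm:technical}; in this simpler case one can write out an explicit Schreier transversal and verify the independence directly via a Haar-preserving change of variables on $W(H)$, using little more than left- and conjugation-invariance. Once the lemma is in place the recursion collapses the proof into the finite-dimensional computation described above.
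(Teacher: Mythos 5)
First, a remark on scope: the paper itself does not prove this statement --- it is imported verbatim from Ab\'ert--Vir\'ag \cite[Proposition 3.10]{AV-dimension_theory} --- so there is no in-paper proof to compare against. Your architecture (monotonicity of the orbit count $c_n$, a uniform bound on $\E[c_n]$ via a self-similar recursion over level-one orbits, and an exponential-in-rank transitivity estimate) is essentially the Ab\'ert--Vir\'ag strategy, and your self-similarity lemma is exactly the $\Delta=\Gamma$ case of the paper's Theorem \ref{thm:technical}: each Schreier generator of $\Gamma_v$ contains as its ``middle letter'' the section of $a_i$ over a distinct non-tree edge of the level-one Schreier graph, that section appears in no other generator, so Lemma \ref{lem:prob_dist} gives joint Haar distribution; across-orbit independence holds because the sections involved lie over disjoint level-one orbits. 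The reduction of almost-sure boundedness to $\sup_n \E[c_n]<\infty$ via monotone convergence is also correct.

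The genuine gap is in closing the recursion for small $m$, and the sentence ``smaller values of $m$ can be handled by pulling back through finitely many iterations'' does not do it. The rank $1+k_i(m-1)$ equals $m$ precisely when $k_i=1$, i.e.\ when a level-one vertex is fixed by all $m$ generators; such singleton orbits reproduce a rank-$m$ subproblem verbatim, so there are branches of the recursion along which the rank never increases, no matter how many levels you descend --- pulling back finitely many iterations gains nothing on those branches. Moreover your crude bound for the intransitive event (at most $d$ orbits, each contributing $\sup_{s\ge m}E_n(s)$) produces a recursion whose dangerous coefficient is $q(m)\cdot d$, with $q(m)$ the intransitivity probability, and this need not be less than $1$: for $H=\Sym(d)$ and $m=2$ the dominant intransitive event is a common fixed point, $q(2)$ is of order $1/d$, and $q(2)\cdot d$ is of order $1$, so the fixed-point bound $1/(1-\cdots)$ degenerates exactly in the case $m=2$ that the theorem asserts. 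The repair is to separate the rank-preserving branches from the rest: the expected number of level-$j$ vertices fixed by all $m$ generators is $d^{j}\cdot(d^{-j})^{m}=d^{j(1-m)}$, which is summable for $m\ge 2$, while every non-singleton orbit has rank at least $2m-1$ and hence feeds the doubly-exponential growth you describe. In other words the recursion must be run as a multi-type branching process in which the low-rank (singleton-orbit) type is shown to be subcritical; without that refinement the induction does not close for $m=2$ and general transitive $H$.
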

\begin{proof}[Proof of Theorem \ref{thm:BddExp}(\ref{itm:Bdd})]
Since every non-cyclic subgroup of $F$ contains a subgroup generated by two elements, it is enough to prove the theorem under the additional assumption that $\Delta$ is generated by two elements. Furthermore, since there are only countably many subgroups of $\Gamma$ that are generated by two elements, we are free to fix one such subgroup $\Delta < F$ and prove the existence of a random variable $M = M(\Delta) \in \N$, bounding the number of orbits of $\Delta$ independently of the level.

Now apply Theorem (\ref{thm:technical}) with $K = 2$. By Theorem \ref{AV:Bdd} there is an upper bound $M(Z)$ on the number of orbits of $A(Z) =
\langle \alpha_{k}(Z) \ | \ 1 \le k \le K \rangle$ in its action on the tree $T_{v(Z)}$
independent of the level. The number of orbits of $\Delta$ will now be bounded by the random
variable $M(\Delta) \defeq \sum_{Z \in L(N)/\Delta} M(Z)$.
\end{proof}

\subsection{Positive Hausdorff dimension.}
Theorem ~\ref{thm:H-dim} follows directly from the following two theorems. The first one, due to Ab\'{e}rt and Vir\'{a}g, asserts that a group generated by $3$ independent Haar-random elements in $W(\Z/p\Z)$ has full Hausdorff dimension. The second one is a corollary of our main technical theorem. 
\begin{theorem} \label{thm:AV_fullHD} \cite{AV-dimension_theory}
Let $H = \Z/p\Z$ be a cyclic group of prime order. Then for almost every $\phi \in \Hom(F_3,W(H))$ 
$$\dim_H(\phi(\Gamma)) = 1.$$
\end{theorem}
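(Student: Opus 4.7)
The plan is to establish that the density sequence $\gamma_n(\Gamma) = \log|\Gamma_n|/\log|W_n|$ satisfies $\liminf_n \gamma_n = 1$ almost surely, which by the Barnea--Shalev formula gives $\dim_H \overline{\Gamma} = 1$. The strategy exploits the self-similar branching structure of $W = W(\Z/p\Z)$: analyze what happens on the first level, then bootstrap via the local cocycle to the subtrees $T_v$ for $v \in L(1)$.

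First I would observe that since $p$ is prime and the Haar measure on $\Z/p\Z$ gives mass $(p-1)/p$ to nontrivial elements, the first-level projection $\Psi_1(\Gamma)$ almost surely contains a $p$-cycle and is therefore transitive. Let $\Gamma' = \Gamma \cap \Gamma^{[1]}$ denote the kernel of the action on $L(1)$. By Bhattacharjee's theorem $\Gamma$ is almost surely free of rank $3$, so Nielsen--Schreier gives that $\Gamma'$ is free of rank $3p - p + 1 = 2p+1$. The local cocycle produces, for each $v \in L(1)$, a subgroup $A_v = \beta(\Gamma',v) < W$ acting on $T_v$, generated by the $\beta$-images of an explicit Schreier basis of $\Gamma'$.

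The crucial step is to argue that each $A_v$ behaves, in distribution, like a subgroup of $W$ generated by $2p+1$ independent Haar-random elements. The mechanism is morally the same as Theorem \ref{thm:technical}: one rewrites the $2p+1$ Schreier generators so that their $\beta(\cdot,v)$ images become Haar-distributed. Granted this, one iterates. At level $n$ the stabilizer has rank comparable to $(2p+1)p^{n-1}$, providing an exponentially growing supply of quasi-independent generators per subtree. Setting up a recursion $\gamma_{n+1} \ge F(\gamma_n)$ with a contraction $F$ whose fixed point is $1$ (coming from the fact that with sufficiently many Haar-random generators one recovers $W$ to arbitrary density), one concludes that $\gamma_n \to 1$ in expectation, and then a Borel--Cantelli or supermartingale argument promotes this to $\liminf \gamma_n = 1$ almost surely.

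The hard part will be controlling the dependence between the $p$ subgroups $A_v$, $v \in L(1)$: they share the same underlying generators of $\Gamma'$, so their joint distribution is far from a product of independent copies. One needs either a decoupling argument showing that, conditionally on the first-level action, the $A_v$ are sufficiently independent for the recursion to close, or an unconditional second-moment estimate on $\gamma_{n+1}-F(\gamma_n)$ strong enough to beat the loss of independence. A secondary obstacle is ensuring the contraction constant of $F$ does not degenerate as $\gamma_n \to 1$, which requires a quantitative version of ``many Haar-random generators saturate $W_n$'' — this is where one would expect to invoke the $p$-group structure of $W$ and reduce to the Frattini quotient at each level.
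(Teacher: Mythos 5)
You should first note that the paper contains no proof of this statement: Theorem \ref{thm:AV_fullHD} is quoted as a black box from Ab\'ert--Vir\'ag \cite{AV-dimension_theory} and is one of the external inputs (together with Theorem \ref{thm:technical}) from which Theorem \ref{thm:H-dim} is deduced. So there is no ``paper's own proof'' to compare against; what can be assessed is whether your sketch would stand on its own.

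As it stands it would not, for two concrete reasons. First, the load-bearing claim --- that the $\beta(\cdot,v)$-images of a full Schreier basis of $\Gamma' = \Gamma \cap \Gamma^{[1]}$ are $2p+1$ jointly independent Haar elements of $W$ --- is asserted rather than proved, and it is false in the form you need it. Across different vertices $v,v' \in L(1)$ the sections $\beta(g,v)$ and $\beta(g,v')$ of the same Schreier generator are words in overlapping sets of the $3p$ independent letters $\{\beta(a_i,w)\}$, so the subgroups $A_v$ are certainly not independent copies; and even at a single $v$, joint Haar-ness of all $2p+1$ images requires an argument (each word must contain a letter occurring in none of the others --- exactly the ``distinguished edge'' condition of Theorem \ref{thm:combinatorial}). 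Your appeal to the mechanism of Theorem \ref{thm:technical} does not supply this: that theorem produces only $K$ elements at one chosen vertex per orbit, after passing to a level $N(K)$ deep enough for the girth/stability conditions of Corollary \ref{cor:stable} to hold, not a complete Schreier basis at level $1$. Second, the analytic core --- the recursion $\gamma_{n+1} \ge F(\gamma_n)$, the decoupling of the $p$ dependent subtree groups needed to close it, the non-degeneracy of the contraction near the fixed point $1$, and the passage from convergence in expectation to $\liminf \gamma_n = 1$ a.s. --- is listed as work to be done rather than carried out. These are precisely the hard points of the Ab\'ert--Vir\'ag argument (which does proceed by a self-similar analysis of level-stabilizer sections and the pro-$p$/Frattini structure of $W(\Z/p\Z)$), so your plan is aimed in a reasonable direction, but it is a plan with its two essential steps left open, not a proof.
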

\begin{corollary} \label{cor:deduce_HD}
Assume that for a given transitive permutation group $H < \Sym(d)$ there exists a number $K \in \N$ such that $\dim_H(\overline{\phi(F_K)}) > 0$ for almost every $\phi \in \Hom(F_K,W(H))$. Fix an integer $m \ge 2$. 
Then for almost every $\phi \in \Hom(F_m,W(H))$ every non cyclic subgroup $\Delta < F_m$ 
$$\dim_H(\phi(\Delta)) \ge 0.$$
\end{corollary}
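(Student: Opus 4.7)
\emph{Proof proposal.} The plan is to reduce to a single non-cyclic two-generator subgroup $\Delta < F_m$ and then use Theorem \ref{thm:technical} to inject an independent Haar-random configuration into $\Delta$ at a random vertex, which by hypothesis already has positive Hausdorff dimension. (I read the conclusion as $\dim_H(\phi(\Delta)) > 0$, consistent with the section title and with the way this corollary is invoked to prove Theorem \ref{thm:H-dim}.)

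Since every non-cyclic subgroup of $F_m$ contains a non-cyclic two-generator subgroup and there are only countably many two-generator subgroups, a countability argument lets me fix $\Delta$ in advance and prove that almost surely $\dim_H(\phi(\Delta)) > 0$. Apply Theorem \ref{thm:technical} to this $\Delta$ with the value of $K$ furnished by the hypothesis, and select one orbit $Z \in L(N)/\Delta$. This gives a random vertex $v = v(Z) \in L(N)$ and elements $\alpha_1,\ldots,\alpha_K \in \Delta_v$ such that the local cocycles $b_k := \beta(\phi(\alpha_k),v)$ are $K$ independent Haar-random elements of $W$. By the standing hypothesis applied to these $b_k$, the subgroup $B := \langle b_1,\ldots,b_K \rangle < W$ satisfies $h := \dim_H(\overline{B}) > 0$ almost surely.

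The remaining task is to transport this positive dimension from $B$, which lives on the subtree $T_v$ (identified with $T$ via $\tau_v$), up to $\phi(\Delta)$ acting on all of $T$. The key inequality I would establish is
$$|\Psi_n(\phi(\Delta))| \;\ge\; |\Psi_{n-N}(B)| \qquad (n \ge N).$$
This holds because $\phi(\Delta_v) < \phi(\Delta)$ stabilises $v$ and so acts on $T_v$; under $\tau_v$ this action carries each $\phi(\alpha_k)$ to $b_k$, so the natural restriction map sends $\Psi_n(\phi(\Delta_v))$ onto a subgroup of $W_{n-N}$ that contains $\Psi_{n-N}(B)$. Combined with the asymptotic $\log|W_{n-N}|/\log|W_n| \to d^{-N}$ that follows from $|W_n| = |H|^{(d^n-1)/(d-1)}$, the inequality gives
$$\dim_H(\phi(\Delta)) \;=\; \liminf_{n\to\infty}\frac{\log|\Psi_n(\phi(\Delta))|}{\log|W_n|} \;\ge\; d^{-N}\,h \;>\; 0.$$

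The genuinely hard work is Theorem \ref{thm:technical}, which is assumed here; within the present corollary the only mild technical point is the restriction inequality above, and it is a routine consequence of the wreath-product structure of $W$ and the cocycle identity. The countability reduction at the start is essential because Theorem \ref{thm:technical} produces a configuration tailored to a single given $\Delta$, and only after fixing $\Delta$ can we say that the event ``$\dim_H(\phi(\Delta)) > 0$'' is of full measure.
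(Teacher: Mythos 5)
Your proposal is correct and follows essentially the same route as the paper: the same countability reduction to a fixed two-generator $\Delta$, the same application of Theorem \ref{thm:technical} with the hypothesized $K$, and the same transport of dimension via $|\Psi_n(\phi(\Delta))| \ge |\Psi_{n-N}(B)|$ together with $\log|W_{n-N}|/\log|W_n| \to d^{-N}$ (your $B$ is the paper's $\beta(A,v)$). Your reading of the conclusion as $\dim_H(\phi(\Delta)) > 0$ rather than $\ge 0$ also matches what the paper actually proves.
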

\begin{proof}
By the same argument used in the proof of of Theorem \ref{thm:BddExp}(\ref{itm:Bdd}) above, we are free to fix a subgroup $\Delta < F$ generated by two elements and show that this specific subgroup almost surely has a positive Hausdorff dimension. Now apply Theorem (\ref{thm:technical}) taking $K$ to be the number given in the Assumption of Corollary \ref{cor:deduce_HD}. This assumption now implies that for any fixed connected component $Z$ of $X_N$ the group $\beta(A,v) := \beta(A(Z),v(Z))$ almost surely acts with full Hausdorff dimension on $T$. In the calculation that follows we use the group associated with one single orbit $A = A(Z)$.
\begin{eqnarray*}
\dim_H(\Delta) & = & \liminf_{n \arrow \infty} \frac{\log|\Delta_n|}{\log|W_n|} \\
               & \ge & \liminf_{n \arrow \infty} \frac{\log|A_n|}{\log|W_n|} \\
           & \ge & \liminf_{n \arrow \infty} \frac{\log|(\beta(A,v))_{n - N}|}{\log|W_n|} \\
           & \ge & \liminf_{n \arrow \infty} \frac{d^{n-N}-1}{d^{n} - 1} \frac{\log|(\beta(A,v))_{n - N}|}{\log|W_{n - N}|}  \\
           & = & \frac{\dim_H(\beta(A,v))}{d^N} > 0
\end{eqnarray*}
In the last line we used our assumption that $\dim_H(\beta(A,v))$ is almost surely positive. All
the groups in this calculation are considered with their given action on the tree $T$. The group
$A$ acts as a subgroup of $\Delta$ and in particular it fixes the vertex $v$. The action of the
group $\beta(A,v)$ on $T$ is isomorphic to the action of $A$ on $T_{v}$. In the line before last
we used the explicit calculation  $\log|W_n| = \log|W_n(H)| = \log|H| \frac{d^n - 1}{d - 1}$.
\end{proof}

\subsection{Expansion}
The proof of Theorem \ref{thm:BddExp}(\ref{itm:Exp}), namely the expansion of the family of connected components of the Schreier graphs $X_n$, requires some of the techniques and terminology that is introduced in the proof of the main technical Theorem \ref{thm:technical}. We therefore defer it to Section (\ref{sec:expander}).

\section{Proof of the main technical theorem} 
\label{sec:proof-main-technical}
\subsection{Some notation} 
\label{sec:some-notation}
Let $F=F_m$ be the group given in the statement of Theorem \ref{thm:technical} - a free group with a free set of generators $S = \{ x_1,x_2,\ldots, x_m\}$. Since every non cyclic group contains a group generated by two elements it will be enough to prove Theorem (\ref{thm:technical}) for a fixed subgroup $\Delta = \langle w_1,w_2 \rangle  < F$ generated by two elements $T=\{w_1,w_2 \} \subset F$.  We write $w_i = w_i(x_1,\ldots,x_s)$ as reduced words in the given generating set for$F$. Let $l_i$ denote the length of the word $w_i$ and set $l = \max \{l_1,l_2\}$. Given $\phi \in \Hom(F,W(H))$, the associated action of $F$ on the different level sets of $T$ gives rise to Schreier graphs $X_n = \Sc(F,S,L(n))$ and $Y_n = \Sc(\Delta,T,L(n))$. When $\phi$ is taken to be random, we know a great deal about the properties of the graphs $X_n$ and we would like to prove similar statements concerning the graphs $Y_n$. This notation will be fixed throughout the proof. 

\subsection{Schreier graphs and immersions} 
\label{sec:schr-graphs-immers}
\begin{definition}
An {\it immersion} of graphs is a map that is locally injective. A {\it covering} of graphs is a map that is 
\begin{itemize}
\item locally a bijection, 
\item surjective and 
\item such that the cardinality of the inverse image of a point is independent of the point.
\end{itemize}
Note that the last
two requirements are redundant if the image is connected. 
\end{definition}

For every $n$ we obtain an immersion
$$\iota_n : Y_n \arrow X_n.$$ 
This map\footnote{This map is not really a graph morphism because it
sends edges to longer paths, we could make this into a graph morphism by partitioning each edge
labeled $w_i$ into $l_i$ segments and labeling them by the letters of the word $w_i$.} is the
identity on the vertices and it sends an edge $e(v,w_i)$ in $Y_n$ to the directed path
$e(v,w_i(x_1,\ldots,x_s)) \subset X_n$. This is the unique path of length $l_i$ starting at $v$ and traversing the path described by the letters of the word $w_i(x_1,\ldots,x_s)$. For any pair of numbers $n_1 < n_2$ we obtain covering maps: $p_{n_2,n_1}:X_{n_2} \arrow X_{n_1}$ and $p_{n_2,n_1}:Y_{n_2} \arrow Y_{n_1}$. These coverings are the maps sending a vertex of $X_{n_2}$ to its unique ancestor in level $n_1$ of the tree. All these maps are compatible in the sense that the following diagram is commutative:

\begin{equation*}
\xymatrix{
 Y_{n + 1} \ar[r]^{\iota_{N+1}} \ar[d]^{p_{n+1,n}} & X_{n+1} \ar[d]^{p_{n+1,n}} \\
 Y_{n} \ar[r]^{\iota_{N}} \ar[d]^{p_{n,n-1}} & X_{n} \ar[d]^{p_{n,n-1}} \\
 Y_{n - 1} \ar[r]^{\iota_{N-1}} & X_{n-1}  \\}
\end{equation*}

\subsection{The stable actions} 
\label{sec:stable-actions}
The following proposition, proved by Ab{\'e}rt and Vir{\'a}g is very important in the sequel.
\begin{theorem} \nonumber \cite[Proposition 4.1]{AV-dimension_theory} Any non
trivial element $f \in F$ fixes only finitely many vertices of $T$, almost surely.
\end{theorem}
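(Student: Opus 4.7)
The plan is to reduce, by countability of $F$, to a fixed non-trivial element $f \in F$ and show that $a := \phi(f)$ has a finite fixed-point set almost surely. I write $N_n := |\{v \in L(n) : v^a = v\}|$; since every automorphism is rooted, the fixed set $\{v \in T : v^a = v\}$ is a subtree of $T$ containing the root, so its finiteness is equivalent to $N_n = 0$ for some (equivalently, all sufficiently large) $n$. Thus the goal becomes showing $N_n \arrow 0$ almost surely.

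My approach is to view $(N_n)$ as a branching-type process and establish extinction. A child $vi$ of a fixed vertex $v$ is itself fixed by $a$ precisely when the first-level cocycle $\overline{\beta(a,v)} \in H$ fixes $i$, giving the recursion
$$N_{n+1} \;=\; \sum_{v \in L(n),\; v^a = v} \bigl|\{i \in \{0,\ldots,d-1\} : \overline{\beta(a,v)}(i) = i\}\bigr|.$$
To analyze the offspring, I would use the wreath-product decomposition $W(H) = W(H)^d \rtimes H$, writing each generator as $a_j = ((a_{j,0},\ldots,a_{j,d-1}); \overline{a_j})$ with $(\overline{a_j})_j$ uniform on $H^m$, $(a_{j,i})_{j,i}$ independent Haar on $W(H)$, and everything mutually independent. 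Iterating this decomposition along the branch from the root down to $v$ expresses $\overline{\beta(a,v)}$ as a random word in the first-level permutations $\overline{a_{j,u}}$ sitting at tree-addresses $u$ along the trajectory of $v$.

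The base case $f = x_j$ is clean: $a$ is Haar, $\overline{a} \in H$ is uniform, and the restriction $\beta(a,i) = a_{j,i}$ at a fixed child is itself Haar and independent of the first-level data; so $(N_n)$ is an honest Galton--Watson process whose offspring is the number of fixed points of a uniformly random element of $H$. Burnside's lemma applied to the transitive action of $H$ on $\{0,\ldots,d-1\}$ gives mean offspring equal to $1$, and Jordan's theorem on transitive permutation groups supplies a derangement in $H$, so zero offspring has positive probability; this critical, non-degenerate Galton--Watson process becomes extinct almost surely. The hard part will be the general case: for instance when $f = x_1^{|H|!}$ the cocycle $\overline{\phi(f)}$ is forced to be the identity, the one-step offspring is always $d$, and the naive first-level argument collapses. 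I expect the remedy to be that in such a degenerate situation the restriction $\beta(\phi(f),i)$ to each first-level child already involves fresh Haar-random coordinates $a_{j,v}$ from deeper in the tree and so is genuinely Haar-random, so that after a bounded number of iterations of the recursion the cocycle distribution becomes uniform on $H$ and the base-case Galton--Watson argument takes over. Making this precise should rest on an induction on a suitable complexity measure of $f$, together with a careful bookkeeping of which Haar-random data from the tree enters the cocycle expressions at each vertex, and is the step I would expect to require the most work.
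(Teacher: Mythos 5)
First, a point of reference: the paper does not prove this statement at all --- it is quoted verbatim from Ab\'ert--Vir\'ag \cite[Proposition 4.1]{AV-dimension_theory} --- so there is no in-paper argument to compare yours against, and I am judging the proposal on its own terms. Your reduction to a fixed word, the observation that the fixed-point set is a rooted subtree so that finiteness amounts to $N_n=0$ eventually, and the base case $f=x_j$ are all correct and well argued: for a single generator the sections at distinct fixed vertices are conditionally i.i.d.\ Haar, so $(N_n)$ really is a Galton--Watson process, Burnside gives mean offspring $1$, Jordan supplies a derangement so the offspring law is not concentrated at $1$, and a non-degenerate critical process dies out.

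The genuine gap is the general case, which you explicitly defer, and the remedy you sketch is false as stated. Take $H=\Z/2\Z$, $d=2$, $f=x_1^2$, and write $a=\phi(x_1)=((a_0,a_1);\overline a)$. Then $\phi(f)=a^2$ equals $((a_0^2,a_1^2);e)$ when $\overline a=e$ and $((a_0a_1,a_1a_0);e)$ when $\overline a\ne e$. Two things break. First, on the event $\overline a=e$ (probability $1/2$, recurring independently at every depth) the section at a fixed child is again the \emph{square} of a fresh Haar element, so its first-level permutation is again forced to be trivial: the cocycle distribution does not ``become uniform on $H$ after a bounded number of iterations'' --- the degenerate type reproduces itself to unbounded (though a.s.\ finite) depth. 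Second, the sibling sections $a_0a_1$ and $a_1a_0$ are conjugate, hence dependent, so $(N_n)$ is not a Galton--Watson process and the base-case argument cannot simply ``take over''. The missing idea --- which is essentially what Ab\'ert and Vir\'ag supply --- is that the section of $w(\a)$ at a fixed vertex is itself $w'(\text{fresh independent Haar elements})$ for a non-trivial reduced word $w'$ of the same length in a new alphabet (one letter $a_{j,u}$ per generator $j$ and visited vertex $u$ of the trajectory), so that the fixed subtree carries a multi-type branching structure over a \emph{finite} set of word-types. Extinction must then be proved for that whole type system, not type by type (your degenerate type $x^2$ above has mean offspring $d>1$), and the sibling dependence must be handled, e.g.\ by a first-moment or supermartingale argument on the type counts rather than by independence. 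Until that bookkeeping is carried out, the proof is incomplete exactly where the difficulty lies.
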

This theorem shows that the stable action of $F$ and of $\Delta$ on $T$, namely the action on high
enough levels, has some nice properties. For example we will use following:
\begin{corollary} (Stable action) \label{cor:stable}
Given $R \ge 1$, there exists a random variable $N = N(R)$ such that for every $n \ge N(R)$ the
following hold:
\begin{enumerate}
\item \label{itm:girthX} $\girth X_n \ge R$,
\item \label{itm:girthY} $\girth Y_n \ge R$,
\item \label{itm:iotainj} The map $\iota_n$ is injective on any edge of $Y_n$,
\item \label{itm:homotopy} $\iota_n(\alpha)$ is contractible for any path $\alpha$ of
length $\le R$ in $Y_n$.
\end{enumerate}
\end{corollary}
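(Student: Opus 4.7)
The plan is to reduce all four items to the Ab\'{e}rt--Vir\'{a}g fixed-point theorem quoted just above. Each item asserts the absence, at sufficiently deep levels, of some ``bad'' combinatorial event, and each such event is witnessed by some non-trivial element of $F$ fixing a level-$n$ vertex of $T$. Since the quoted theorem says that every non-trivial element of $F$ almost surely fixes only finitely many vertices of $T$, a finite union of such fixed sets lives inside a finite subtree, and for $n$ beyond its top level no bad event can occur. This is what produces the required random $N(R)$.

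First I would prove items (\ref{itm:girthX}) and (\ref{itm:girthY}). A non-backtracking closed walk of length $r$ at a vertex of $L(n)$ in $X_n$ corresponds to an $S \cup S^{-1}$-reduced word $u$ of length $r$ fixing that vertex; since there are only finitely many non-trivial such words of length $< R$, Ab\'{e}rt--Vir\'{a}g yields a random level $N_1(R)$ beyond which no such $u$ has a level-$n$ fixed point, which is exactly (\ref{itm:girthX}). The same idea gives (\ref{itm:girthY}), using $T \cup T^{-1}$-reduced words in $w_1, w_2$. The one subtlety is that I need such a $T$-reduced word to represent a non-trivial element of $F$, otherwise Ab\'{e}rt--Vir\'{a}g is silent: this is precisely where the non-cyclicity of $\Delta$ enters. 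Being a non-cyclic, $2$-generated subgroup of the free group $F$, $\Delta$ is free of rank exactly $2$; and since any generating set of size equal to the rank of a finitely generated free group is automatically a basis (by the Hopf property of free groups of finite rank), $\{w_1, w_2\}$ freely generates $\Delta$. Hence non-trivial $T$-reduced words inject into $F$, and Ab\'{e}rt--Vir\'{a}g furnishes a random level $N_2(R)$ with $\girth Y_n \ge R$ for all $n \ge N_2(R)$.

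Items (\ref{itm:iotainj}) and (\ref{itm:homotopy}) now follow from (\ref{itm:girthX}) applied with a suitably enlarged $R$. The immersion $\iota_n$ sends an edge of $Y_n$ labelled $w_i$ to the walk in $X_n$ spelling out the $F$-reduced word $w_i$: this walk is non-backtracking and of length $l_i \le l$, and any non-backtracking walk shorter than the girth is an embedded path, so $\girth X_n \ge l + 1$ yields (\ref{itm:iotainj}). Likewise a path of length $\le R$ in $Y_n$ maps under $\iota_n$ to a walk in $X_n$ of length at most $R l$; a walk using strictly fewer edges than the girth cannot contain a cycle in its image, so the image is a tree and in particular contractible, which is (\ref{itm:homotopy}) provided $\girth X_n \ge R l + 1$. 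Setting $N(R) := \max\{N_1(R l + 1),\, N_2(R)\}$ finishes the argument. The only mildly delicate point is the rank argument for (\ref{itm:girthY}); beyond that the corollary is a routine packaging of Ab\'{e}rt--Vir\'{a}g together with the observation that a short girth, a self-intersecting edge image, or a non-contractible short image is always certified by a non-trivial element of $F$ of controlled word length fixing some vertex.
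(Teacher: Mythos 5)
Your proof is correct and follows essentially the same route as the paper's: reduce all four items to the Ab\'ert--Vir\'ag fixed-point theorem to get the girth bounds (\ref{itm:girthX}) and (\ref{itm:girthY}), then derive (\ref{itm:iotainj}) and (\ref{itm:homotopy}) from $\girth X_n$ being at least $l+1$ and $lR+1$ respectively. The one point you make explicit that the paper leaves implicit --- and it is a genuinely necessary point --- is that non-cyclicity forces $\{w_1,w_2\}$ to be a free basis of $\Delta$, so that a short reduced word in the $w_i$'s certifies a non-trivial element of $F$ fixing a vertex; your "image subgraph is a tree" phrasing of (\ref{itm:homotopy}) is a cosmetic variant of the paper's reduction to the reduced word $w$.
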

\begin{proof}
All these properties are stable when passing to a cover, so it is enough to verify them for $n = N$. For
(\ref{itm:girthX}) take $N$ large enough that no element of $F$ which is of word length at most
$R$ fixes any point in $L(N)$. For (\ref{itm:girthY}) do the same, making sure that no element of
$\Delta$ which is of word length less than $R$ in the generators of $\Delta$ fixes any point on
$L(N)$. Recall that the generators of $\Delta$ are written as reduced words $w_i(x_1,\ldots, x_s)$ in the generators of $F$. Assume
that $w_i$ is a word of length $l_i$ and set $l = \max\{l_1,l_2\}$. If, using Property
(\ref{itm:girthX}) $\girth(X_N)$ is chosen to be larger than $l$, then Property
(\ref{itm:iotainj}) follows. Finally if we take $\girth(X_N) \ge l R$ then property
(\ref{itm:homotopy}) holds. Indeed let $\alpha = w_{i_1}w_{i_2} \ldots w_{i_R}$ represent a path
of length $R$ in $Y_N$. The path $\iota_S(\alpha)$ need not be reduced, so we cannot assume that
$\iota_S$ is injective on $\alpha$. However, the word $w_{i_1}w_{i_2} \ldots w_{i_R}$  is
equivalent, in the free group $F$, to a unique reduced word $w$. Therefore $\iota_N(\alpha)$ is
homotopically equivalent to a path labeled by the word $w$. Since $\operatorname{length} w \le
\operatorname{length} w_{i_1}w_{i_2} \ldots w_{i_R} \le lR$ the path $w$ is just a line and
therefore contractible. So $\iota_N(\alpha)$ must be contractible as well.
\end{proof}

\subsection{Reformulation of the main technical theorem in combinatorial terms}
\label{sec:reform-main-techn}
Keeping the notation defined in Section \ref{sec:some-notation} 
\begin{theorem} \label{thm:combinatorial}
Given $K \in \N$, there exists a random variable $N = N(K) \in \N$ and for each connected component $Z$ of $Y_N$ there exist random variables as
follows:
\begin{itemize}
\item A vertex $v(Z) \in Z$,
\item $K$ elements of the fundamental group $$\alpha_1(Z), \ldots, \alpha_K(Z) \in \pi_1(Z,v(Z)),$$
\item $K$ random geometric edges $$\{e_1(Z),e_2(Z),\ldots,e_K(Z)\} \subset EX_N,$$

\end{itemize}
such that the directed path $\iota_N(\alpha_j(Z))$ passes through the edge $e_k(Z)$ exactly once if $j = k$ and not at all if $j \ne k$.
\end{theorem}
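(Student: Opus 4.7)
The plan is to realize, for each connected component $Z$ of $Y_N$, the $K$ loops $\alpha_j$ as basis elements of $\pi_1(Z,v(Z))$ corresponding to $K$ carefully chosen ``chord'' edges of a spanning tree, and to take the witnesses $e_j$ inside the $\iota_N$-image of the corresponding chord. First I would apply Corollary~\ref{cor:stable} with a threshold $R$ much larger than $lK$ to secure a level $N$ at which $Y_N$ and $X_N$ have girth at least $R$, $\iota_N$ is injective on every edge of $Y_N$, and $\iota_N$ sends every path of length at most $R$ in $Y_N$ to a reduced (hence embedded) path in $X_N$. These properties are preserved on passing to deeper levels, so $N$ may be enlarged as needed.

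Next I would ensure every orbit $Z\in L(N)/\Delta$ carries enough cyclic structure. By Bhattacharjee's theorem the action is almost surely faithful and, since $\Delta=\langle w_1,w_2\rangle$ is non-cyclic, $\Delta$ is free of rank $2$; Nielsen--Schreier then gives $\operatorname{rank}\pi_1(Z,v)=[\Delta:\Delta_v]+1=|Z|+1$. Applying the Ab\'ert--Vir\'ag finite-fixed-points theorem to all (finitely many) elements of $\Delta$ of word length at most some $R'$ shows that for $N$ large no such element fixes any $v\in L(N)$; consequently the ball of radius $R'$ in the Cayley graph of $\Delta$ embeds into each orbit, forcing $|Z|\ge K-1$ and hence $\operatorname{rank}\pi_1(Z,v)\ge K$. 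Since large girth makes $Z$ locally a $4$-regular tree, a ball-packing argument inside $Z$ then produces $K$ chord edges $f_1,\dots,f_K$ whose fundamental cycles $C_1,\dots,C_K$ are pairwise edge-disjoint and at pairwise graph distance at least $2l+2$ in $Z$. I then build a spanning tree $T_Z$ containing every edge of each $C_j$ except the chord $f_j$.

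With any root $v=v(Z)\in Z$ chosen, let $\alpha_j\in\pi_1(Z,v)$ be the basis loop that traverses the $T_Z$-path from $v$ to $o(f_j)$, crosses $f_j$, and returns along $T_Z$; take $e_j$ to be any edge of the length-$l$ reduced path $\iota_N(f_j)\subset EX_N$. The edge-injectivity of $\iota_N$ together with the pairwise separation of the chords makes the paths $\iota_N(f_1),\dots,\iota_N(f_K)$ pairwise edge-disjoint and each $\iota_N(f_j)$ disjoint from $\iota_N(C_k\setminus\{f_k\})$ for $k\ne j$; a small rerouting of $T_Z$ arranges also that the $\iota_N$-images of the $T_Z$-paths used by each $\alpha_k$ avoid the chord-paths $\iota_N(f_j)$ for $j\ne k$. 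Consequently $\iota_N(\alpha_j)$ traverses $e_j$ exactly once (when it crosses $f_j$) and does not traverse $e_k$ at all for $k\ne j$.

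The main obstacle I expect is the ball-packing and spanning-tree-rerouting step: arranging, uniformly over every component $Z$ of $Y_N$ at once, both $K$ pairwise edge-disjoint well-separated simple cycles and a spanning tree whose chord-paths from $v(Z)$ stay clear of the other chords. Large girth (making $Y_N$ locally look like a tree) is what makes both packing and rerouting possible in principle; the technical heart of the proof is executing this bookkeeping uniformly over all components.
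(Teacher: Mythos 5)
Your overall scheme---pick distinguished edges in $Y_N$, use the stability corollary to control girth and edge-injectivity of $\iota_N$, and close each distinguished edge into a loop through a common basepoint avoiding the other distinguished edges---is the same skeleton as the paper's proof. But the step you yourself flag as ``the technical heart'' is in fact a genuine gap, not bookkeeping. Write $\zeta(e)=\{f\in EY_N : e\subset \iota_N(f)\}$ for the set of $Y_N$-edges whose $\iota_N$-image traverses a given $X_N$-edge $e$. For $\iota_N(\alpha_k)$ to avoid $e_j$ for $j\ne k$, the loop $\alpha_k$ must avoid \emph{every} edge of $\zeta(e_j)$, not merely the chord $f_j$; so your ``rerouted spanning tree'' must live inside $Z\setminus\bigcup_j\zeta(e_j)$, i.e.\ you need the complement of this bounded set of edges to remain connected. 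Large girth does not give this: a $4$-regular graph that is locally a tree can still have a bottleneck of boundedly many edges whose removal disconnects it into two large pieces, and then no rerouting can join $v(Z)$ to a chord trapped on the wrong side. Nothing in your proposal rules this out.

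The paper's proof is organized around exactly this obstruction (its Condition~(4), called there ``a very weak expansion property'') and resolves it with an idea your proposal lacks: a two-stage covering argument. First, at a level $N'$, the edges are chosen far apart so that Conditions (1)--(3) hold and, crucially, so that no connected component of $Z\setminus\zeta(E')$ is a tree (a tree component would force a short path labelled by a power of $w_1$ to cross the same removed edge twice, contradicting $\girth(X_{N'})\ge 2lD$). Each surviving component therefore contains a cycle, so after passing to a sufficiently deep cover $p:Y_N\arrow Y_{N'}$ every component of its preimage is at least as large as $Z$. Finally a pigeonhole count over the $d$ systems of lifts of the chosen edges shows that at least one choice leaves the cover connected: otherwise removing all $d$ disjoint lift-systems would produce at least $d+1$ components, each of size at least $|Z|$, in a degree-$d$ cover of $Z$. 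You would need to supply an argument of this kind (or some other proof that $Z\setminus\zeta(E)$ can be made connected) before the spanning-tree construction can get off the ground; as written, the proposal assumes the conclusion at its hardest point.
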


We will prove Theorem (\ref{thm:combinatorial}) in the next section, here we will 
assume Theorem (\ref{thm:combinatorial}) and show that Theorem (\ref{thm:technical}) follows in a
straight forward way.

Let $N,v(Z),\{\alpha_k(Z)\}_{1 \le k \le K} ,\{e_k(Z)\}_{1 \le k \le K}$ be the random variables provided by 
Theorem \ref{thm:combinatorial}. Since there is no interaction between different connected components, we will fix the connected component $Z \subset Y_N$ once and for all and drop it from the notation. Thus we will write $\alpha_i = \alpha_i(Z), v = v(Z)$ etc'.

The group $\Delta_v$ can be naturally identified with the fundamental group $\pi_1(Z,v)$, so we can
identify $\alpha_k$ as elements of $\Delta_v$. {\it Our goal will be to prove Theorem
(\ref{thm:technical}) by showing that the random group elements $\{\xi(\alpha_k,v)\}_{1 \le k \le
K}$ are independent and identically distributed according to Haar measure on the group $W(H)$}. In
order to understand the statistical distribution of the elements $\alpha_k$ and their action on
the tree $T_v$ we push forward to the graph $X_N$. We name the edges along this path
$\iota_N(\alpha_k) = a_{k,1} a_{k,2} \ldots a_{k,M(k)}$. Group theoretically, the labels across
these edges give the expansion of the word $\alpha_k$ in the generators of $F$. An example of such a path $\iota_n(\alpha)$ is depicted in
Figure (\ref{fig:alpha}).
\begin{figure}[htb]
\begin{center}
\includegraphics[width=11cm]{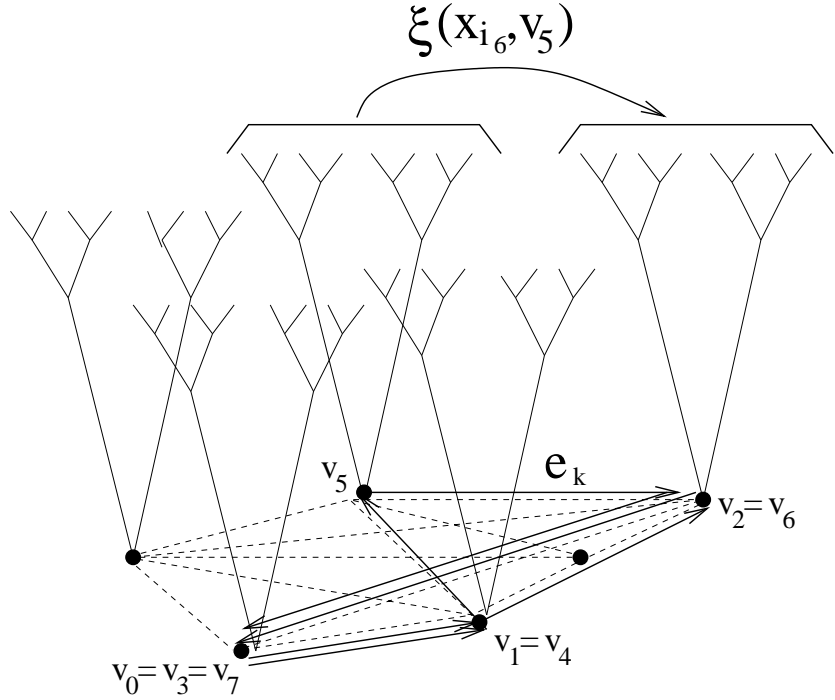}
\caption{An example of a path of the form $\iota(\alpha)$, with a singled edge $e$.}
\label{fig:alpha}
\end{center}
\end{figure}Using the cocycle
equality we decompose the cocycle across the path
\begin{equation} \label{eqn:expansion}
\xi(\alpha_k,v) = \xi(a_{k,1}) \xi(a_{k,2}) \ldots \xi(a_{k,M(K)}).
\end{equation}
The advantage of this procedure is that now the statistical distribution of the factors on the
right hand side is simple. Each factor is distributed according to Haar measure on $W(H)$. Factors
that correspond to different geometric edges are jointly independent. Two factors that correspond
to the same (resp. to opposite) directed edges, are identical (resp inverse to each other).

In general the distribution of the expressions given by Equation (\ref{eqn:expansion}) is
complicated. Because of the special nature of Haar measure, the calculation becomes easy under the
special conditions provided by Theorem (\ref{thm:combinatorial}). Recall that Theorem
(\ref{thm:combinatorial}) singles out one edge, say $e_k = a_{k,l(k)}$, along the path
$\iota_N(\alpha_k)$. This edge is unique and appears nowhere else. We collect terms in Equation
\ref{eqn:expansion} grouping together the factors that come before and after the special edge. In
other words we can write $$\xi(\alpha_k,v) = \beta_k,\gamma_k,\delta_k$$ where we set $\beta_k =
\xi(a_{k,1}) \xi(a_{k,2}) \ldots \xi(a_{k,l(k)-1})$, $\gamma_k = \xi(a_{k,l(k)})$, $\delta_k =
\xi(a_{k,l(k)+1}) \ldots \xi(a_{k,M(k)})$. Note that the random elements $\gamma_k \in W(H)$ are
distributed according to Haar measure and completely independent of all the other random
variables. Under these conditions Theorem \ref{thm:technical} will follow by the following lemma.
\begin{lemma} \label{lem:prob_dist}
Let $G$ be a compact topological group with probability Haar measure $P$. Let $\{\beta_k, \gamma_k
,\delta_k \in G \ | \ 1 \le k \le K \}$ be random group elements, such that each one of the
elements $\gamma_k$ is distributed according to Haar measure and is independent of all the other
random variables. Then the elements $\eta_k = \beta_k \gamma_k \delta_k$ are distributed according to Haar
measure and furthermore $\{\eta_k \ | \ 1 \le k \le K \}$ are independent.
\end{lemma}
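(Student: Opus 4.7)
My plan is to exploit the two-sided invariance of Haar measure under conditioning. The basic fact I will use is that if $\mu$ is Haar on the compact group $G$ and $b, d \in G$ are any fixed elements, then the pushforward of $\mu$ under $g \mapsto bgd$ is again $\mu$. I will apply this conditionally once I have arranged things so that the $\gamma_k$'s behave like an independent Haar sample relative to the conditioning $\sigma$-algebra.

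First I would upgrade the hypothesis ``each $\gamma_k$ is independent of all the other random variables'' to joint independence of the family $\{\gamma_1,\ldots,\gamma_K\}$ from the family $\{\beta_1,\delta_1,\ldots,\beta_K,\delta_K\}$. This is not automatic from pairwise-style statements, but the hypothesis is stronger than pairwise: each $\gamma_k$ is independent of the entire $\sigma$-algebra generated by the remaining variables. A short inductive argument (peel off one factor at a time inside an expression of the form $E[\prod_k f_k(\gamma_k) \cdot F(\beta,\delta)]$) gives joint independence of the $\gamma_k$'s both from each other and from $\mathcal{F} := \sigma(\beta_1,\delta_1,\ldots,\beta_K,\delta_K)$.

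Once this is done, I would condition on $\mathcal{F}$. On this event the tuples $\beta_k,\delta_k$ are constants, and the $\gamma_k$'s are still a jointly independent family of Haar-random elements. For each $k$, the conditional distribution of $\eta_k = \beta_k \gamma_k \delta_k$ given $\mathcal{F}$ is therefore the pushforward of Haar measure under the left-right translation $g \mapsto \beta_k g \delta_k$, which by invariance is again Haar. By the conditional independence of the $\gamma_k$'s we also have that $\eta_1,\ldots,\eta_K$ are conditionally independent given $\mathcal{F}$. Hence for any Borel sets $A_1,\ldots,A_K \subset G$,
\begin{equation*}
P(\eta_1 \in A_1,\ldots,\eta_K \in A_K \mid \mathcal{F}) \;=\; \prod_{k=1}^{K} P(A_k),
\end{equation*}
a deterministic constant. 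Taking expectations eliminates the conditioning and yields the desired product formula, which simultaneously establishes that each $\eta_k$ is Haar-distributed and that they are jointly independent.

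I do not expect any real obstacle here; the only subtlety worth checking carefully is the promotion from ``each $\gamma_k$ independent of everything else'' to full joint independence of the $\gamma_k$'s with the $(\beta,\delta)$-variables, since pairwise independence statements do not in general imply joint independence. The hypothesis as stated is strong enough to push the inductive peeling through, and once joint independence is in hand the rest is a direct application of Haar invariance.
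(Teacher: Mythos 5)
Your proof is correct and follows essentially the same route as the paper: condition on the $(\beta_k,\delta_k)$ variables, apply the bi-invariance of Haar measure to see that the conditional law of $(\eta_1,\ldots,\eta_K)$ is product Haar, and then integrate out the conditioning (the paper invokes Fubini at this step). The one point you treat more carefully than the paper is the promotion of ``each $\gamma_k$ is independent of all the other variables'' to joint independence of the $\gamma_k$'s from $\sigma(\beta_1,\delta_1,\ldots,\beta_K,\delta_K)$; your inductive peeling is valid precisely because the hypothesis gives independence from the full $\sigma$-algebra of the remaining variables rather than mere pairwise independence.
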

\begin{proof}
The conclusion of the theorem is true, by the bi-invariance of Haar measure, if we condition on
the value of the random variables $\{\gamma_k,\delta_k \}$. We conclude, using Fubini, by integrating over
all these possible values.
\end{proof}
Thus we have reduced the proof of the theorem to the proof of the combinatorial reformulation \ref{thm:combinatorial}. 
\subsection{Proof of the combinatorial reformulation}
Given an edge $e \in EX_n$ we denote by $\zeta(e) = \{f \in EY_n | e \subset \iota_n(f)\}$. More
generally if $E \subset EX_n$ is a collection of edges then $\zeta(E) = \cup_{e \in E} \zeta(e)$.

In order to satisfy Theorem (\ref{thm:combinatorial}) we need to come up with random variables
$$N \in \N, \quad v(Z) \in Z, \quad \{\alpha_k(Z) \in \pi_1(Z,v(Z))\}_{1 \le k \le K}, \quad  \{e_k(Z) \in EX_N\}_{1 \le k \le K}$$
for every connected component $Z \subset Y_N$. Since we do not require any relationship between data associated with different connected components we will omit
$Z$ from the notation, keeping always in mind that all requirements should be satisfied for each
connected component separately.

\medskip
\subsection{Conditions imposed on the edges $e_k$.} We focus on the collection of edges $E = E(Z)
= \{e_k(Z)\}_{1 \le k \le K}$. If we can arrange for the edges $E$ to have the following good
properties, then the rest of the construction will follow.
\begin{enumerate}
\item \label{itm:stable} 
$\iota_{N}$ is injective on edges of $\zeta(E)$.
\item \label{itm:disjoint} 
If $f_k \in \zeta(e_k)$ then $E \cap \iota_{N}(f_{k}) = e_k$,
\item \label{itm:vertices} $Z \setminus \zeta(E(Z)) \ne \emptyset$,
\item \label{itm:connected} 
$Z \setminus \zeta(E(Z))$ is connected.
\end{enumerate}
Indeed assume that all these conditions are satisfied. Fix a representative $f_k \in \zeta(e_k)$
for every $1 \le k \le K$. Using Condition (\ref{itm:vertices}) we can choose a vertex $v(Z) \in Z
\setminus \zeta(E(Z))$. Since by Condition (\ref{itm:connected}) $Z \setminus \zeta(E)$ is
connected, we can complete each edge $f_k$ into a loop $\alpha_k \in \pi_1(Z,v(Z))$ without using
any other edge from $\zeta(E)$. Thus $\iota_N(\alpha_k)$ can possibly intersect one of the edges
from $E$ only in the small segment $\iota_N(f_k)$. Finally by Condition (\ref{itm:disjoint}) the
edge $e_j$ does not appear in $\iota_N(\alpha_k)$ whenever $k \ne j$. And by condition
(\ref{itm:stable}) $e_k$ appears in $\iota_N(\alpha_k)$ exactly once.

That the first three conditions above can be satisfied, almost surely, when $N$ is large enough,
follows directly from Corollary (\ref{cor:stable}). Condition (\ref{itm:connected}) is a very weak
expansion property. We want to be able to erase a set of edges of the form $\zeta(E)$ without
disconnecting the graph, but we have to do that for every choice of generators $\{w_1,w_2\} \subset F$.

\medskip
\subsection{Approximate random variables.} We start by constructing approximate random variables
$N', E' = \{e'_{k}\}, F' = \{f'_{k}\}, v'$. These will satisfy the first three conditions and
instead of Condition (\ref{itm:connected}) they will satisfy the following auxiliary condition:
\begin{enumerate}
\setcounter{enumi}{4}
\item \label{itm:aux} There are no trees among the connected components of the graph
$Z \setminus \zeta(E').$
\end{enumerate}
\medskip

Recall that each generator of $\Delta$ is given as a word $w_i(x_1,\ldots,x_s)$ of length $l_i$ in
the generators of $F$. Set $l = \max \{l_1,l_2\}$. An edge of type $w_i$ in $Y_n$ is mapped by
$\iota$ to a path of length $l_i$ in $X_n$. In particular there are at most $D=2K(l_1 + l_2)$
edges in $\zeta(E')$ and this bound is independent of $N'$.

We will take $N'$ large enough for all the stability conditions from Corollary (\ref{cor:stable})
to hold, specifying the parameter $R$ as we go. Condition (\ref{itm:stable}) is satisfied Using
Corollary (\ref{cor:stable})(\ref{itm:iotainj}). By \ref{cor:stable}(\ref{itm:girthY}) we can
make the connected components of $Y_{N'}$ large enough to choose the edges $e'_k$ in such a way
that $d_{X_{N'}}(e'_{k_1},e'_{k_2}) > 2 l D$ for every $k_1 \ne k_2$. This is more than enough to
ensure that Condition (\ref{itm:disjoint}) holds and that there are enough vertices in order to
satisfy Condition (\ref{itm:vertices}).

Finally assume that the auxiliary Condition (\ref{itm:aux}) does not hold. In other words that
there is a connected component of $Z' \subset Z \setminus \zeta(E)$ that is isomorphic to a tree.
But a tree that can be cut out of a $4$-regular graph by erasing $D$ edges has at most $D-2$
edges. Consider a path $\alpha \subset Z$ that is labeled by $w_1^{A}$, with the property that
the first and last edges are in $\zeta(E)$, but all the rest of the edges are contained in $Z'$. A
non empty path like this always exists, upon possibly replacing $w_1$ by $w_2$. Since $Z'$ is a
tree with at most $D-2$ edges we know that $A \le D$. The image of this pass $\iota_{N}(\alpha)$
in $X_{N'}$ intersects $E$ twice. In fact $\iota_{N'}(\alpha)$ has to pass through the same edge
$e \in E$ twice, because we have chosen the edges in $E$ in such a way that
$d_{X_{N'}}(e'_{k_1},e'_{k_2}) \ge 2 l D$. This is demonstrated in Figure (\ref{fig:cyc_red}),
where the two highlighted edges should be identified, these represent the edge $e$. It is not true
that the path $\iota_{N'}(\alpha)$ is a path without backtracking in the graph $X_{N'}$. But, if
we write $w_1 = v w v^{-1}$ where $w$ is non-trivial cyclically reduced word, then there is a
homotopically equivalent path labeled $v w^{d} v^{-1}$ which is a path without backtracking. By
minimality of $\alpha$ this new path still passes through the edge $e$ twice and therefore it is
not homotopically trivial. Now we reach a contradiction if we require $R$ to be large enough so
that $\girth(X_{N'}) \ge R \ge 2 l D$.

\begin{figure}[tpb]
 \begin{center}
 \includegraphics[width=10cm]{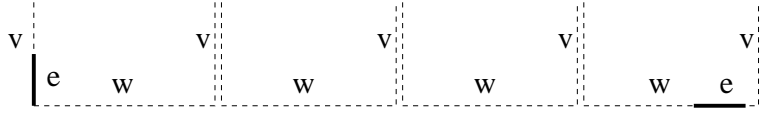}
 \caption{The path $\iota_{N'}(\alpha)$.}
 \label{fig:cyc_red}
 \end{center}
\end{figure}

By Theorem \cite[Proposition 3.10]{AV-dimension_theory}, which is our Theorem \ref{thm:BddExp}(\ref{itm:Bdd}) for the case $\Delta = \Gamma$, there is a an upper bound on the number of orbits of the group $\Gamma$ itself, independent of the level. In other words there exists a random variable $M$, such that the number of connected components of $X_m$ is independent of $m$ for $m \ge M$. We will impose the additional condition $N' \ge M$.

\medskip
\subsection{Passing to a cover.} In order to satisfy Condition (\ref{itm:connected}) we will
choose some bigger $N = N(K) \ge N'$, thereby passing to a cover $p = p_{N,N'}: X_{N} \arrow
X_{N'}$. By our auxiliary condition, for every connected component $Z$ of $Y_{N'}$, every
connected component $Z'$ of $Z \setminus \zeta(E(Z))$ is not a tree. Let $\alpha(Z')$ denote a
non-trivial circle inside every such $Z'$. The importance of these circles is that, by choosing
$N$ big enough and using Corollary \ref{cor:stable}(\ref{itm:girthY}), we can require every
connected component of $p^{-1}(Z')$ to be as large as we want. In fact if we make $\girth(Y_N) \ge
R$ then every lifting of every circle $\alpha(Z')$ is at least of length $R$. Specifically, we
choose $R = \max \{|Z| \ \big{|} \ Z {\text{ is a connected component of }} Y_{N'} \}$ so {\it for
every $Z'$ every connected component of $p^{-1}(Z')$ is at least as large as $Z$}. Figure
(\ref{fig:nogrow}) depicts a situation where one of the components of $Z \setminus \zeta(E)$ is
contractible and therefore cannot grow when we pass to a cover.
\begin{figure}[htb]
\begin{center}
 \includegraphics[width=10cm]{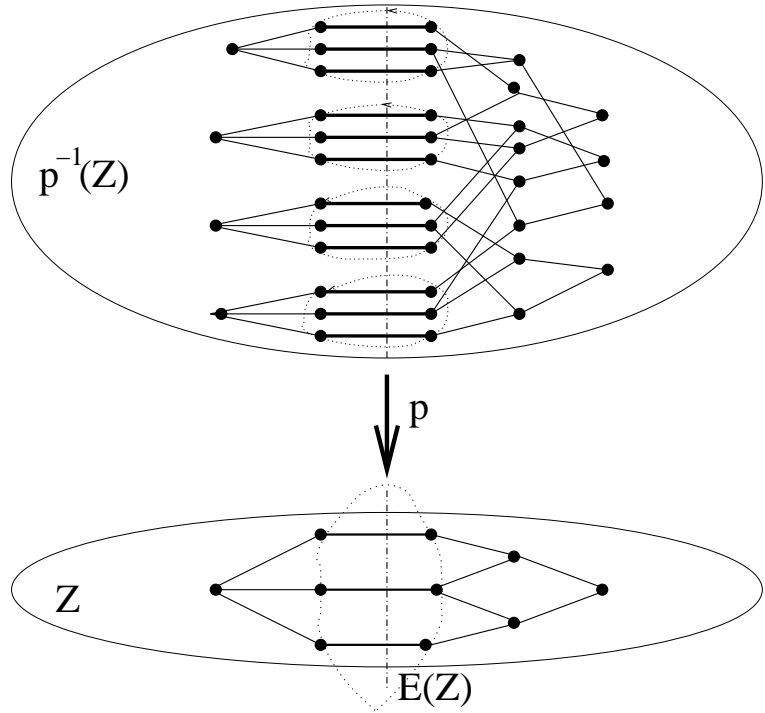}
 \caption{A component of $Z \setminus \zeta(E)$ that does not grow when passing to covers.}
\label{fig:nogrow}
\end{center}
\end{figure}

\medskip
\subsection{Connected complements.} Let $W$ be any connected component of $Y_N$. Note that $Z =
p(W)$ is a connected component of $Y_{N'}$. In fact the map $p$ restricted to $W$ becomes a
covering map of connected graphs $p: W \arrow Z$, say of degree $d = d(W)$. We choose the edges
$F(W) = \{f_{k}(W) \ | \ 1 \le k \le K \}$ as liftings of the edges $f'_{k}(Z)$, namely in such a
way that
\begin{equation} \label{eqn:choice}
p(f_{k}(W)) = f'_{k}(Z).
\end{equation}
The edges $E(W) = \{e_{k}(W) \ | \ 1 \le k \le K \}$ are then determined by the requirement that
$e_{k}(W)$ is the first edge in the path $\iota_N(f_{k}(W))$. The condition $p(e_{k}(W)) =
e'_{k}(Z)$ as well as Conditions (\ref{itm:stable}), (\ref{itm:disjoint}) and (\ref{itm:vertices})
from the statement of the theorem are automatically satisfied by our choices.

Assume by way of contradiction that Condition (\ref{itm:connected}) cannot be satisfied by such a
construction. In other words for every choice of edges $F(W)$ satisfying the Equation
(\ref{eqn:choice}) the graph $W \setminus \zeta(F(W))$ is disconnected. There are $d$ different
liftings of the edge $f'_{k}(Z)$, we give them numbers as follows: $p^{-1}(f'_{k}(Z)) =
\{f_{k}(W,i) | 1 \le i \le d \}$. And we focus our attention on $d$ specific choices $F(W,i) =
\{f_{k}(W,i) | 1 \le k \le K \}$. Since $\zeta(E(W,i)) \cap \zeta(E(W,i')) = \emptyset \ \forall i
\ne i'$ and since by our contradiction assumption $W \setminus \zeta(E(W,i))$ is disconnected for
every $i$ we conclude that:
\begin{equation*}
W \setminus \zeta \big(E(W,1) \cup E(W,2) \cup E(W,3) \cup \ldots \cup E(W,d) \big),
\end{equation*}
has at least $d+1$ connected components.
But each one of these connected components covers a connected component of $Z \setminus
\zeta(E'(Z))$, so by our choice of $N$ each one of these connected components is of size at least
$|Z|$. So $W$ has at least $(d+1)|Z|$-vertices, contradicting the fact that $p: W \arrow Z$ is a
covering of degree $d$.

\subsection{Measurability} One has to verify the measurability of the random
variables that we have constructed \footnote{For notation purposes, we forget now all the
attributes of the random variables and think of them only as functions on $\Omega$. Thus for
example $F = F(\phi)$ is understood as a function associating to $\phi \in \Omega$ a collection of
edges $\{f_k(Z)(\phi)\}$ in the graph $Y_{N(\phi)}$ which is itself dependent on $\phi$.}. In fact
we can even make all these random variables continuous, on a subset of measure one in $\Omega$.
The first thing to note, is that Conditions (1-4) are open conditions. If conditions (1-4) are
satisfied for some $\phi \in \Omega$, then they will be satisfied, with the same values of
$N,F,E,v$, for any other $\phi' \in \Omega$ as long as
\begin{equation} \label{eqn:N-levels}
\Psi_{N} \circ \phi = \Psi_{N} \circ \phi'.
\end{equation}
Recall that $\Psi_N: W(H) \arrow W_N(H)$ is the homomorphism that remembers only the action on the
first $N$ levels of $T$. Thus Equation (\ref{eqn:N-levels}) above just means that the actions
$\phi(F)$ and $\phi'(F)$ coincide on the first $N$ levels of the tree. Note that it is meaningful
to say that the values of $F,E,v$ are the same, because Equation (\ref{eqn:N-levels}) implies, in
particular, that the graphs $X_N,Y_N$ are the same for $\phi$ and $\phi'$.

We can now enumerate the possible values of the 4-tuple $(N,E,F,v)$, making sure that lower values
of $N$ appear first in the list. The value $(N(\phi), E(\phi), F(\phi), v(\phi))$ is then chosen
to be the first one in the list, that still satisfies conditions $(1-4)$ of the theorem. All the
discussion above shows that this is well defined for all but a null subset of $\Omega$. Moreover
this construction ensures that if Equation (\ref{eqn:N-levels}) holds then
$(N(\phi),F(\phi),E(\phi),v(\phi)) = (N(\phi'),F(\phi'),E(\phi'),v(\phi'))$, thereby proving
continuity of the random variables on a co-null set. This completes the proof of Theorem \ref{thm:technical}.

\section{Expanders} \label{sec:expander}
The goal of this section is to prove Theorem \ref{thm:BddExp}(\ref{itm:Exp}); namely to establish the expansion property, for the connected components of the Schreier graphs $Y_n$ - coming from the action of a subgroup of a random group on the levels of the tree. In accordance with our general strategy; we will use a known result of Bilu and Linial concerning expansion of random towers of graphs $X_n = \Sc(F_K,S, L(n))$ and then combine it with our Theorem \ref{thm:technical} to obtain a proof of Theorem \ref{thm:BddExp}(\ref{itm:Exp}). 
\subsection{The paper of Bilu and Linial.} \label{sec:BL}
Bilu and Linial \cite{BL:Expanding_on_Trees} treat only the case of $2$-lifts; in our terminology this corresponds to the case $d=2$ and $W=W(\Z/2\Z)$, the full automorphism group of the binary tree. It is not difficult to generalize their results to the general setting of the group $W(H)$. But since this takes us too far from the methods of the current paper we will not peruse this generalization here. The following theorem is not explicitly stated by Bilu and Linial but it does follow directly from their proof:
\begin{theorem} \label{thm:BL} (Bilu-Linial)
Let $T$ be the binary tree, $\phi \in \Omega$ a random action of $F = F_m = \langle S \rangle$ on
$T$ and set $\eta = 1/(4 m^6)$. Then with probability of at least $(1 - 2 \eta)^{1/(1-\eta)}$, the
Schreier graphs $X_n = \Sc(F,S,L(n))$ admit a uniformly bounded spectrum:
 $$\Spec{X_n} \subset \{\pm 2m\} \cup \left[-C \sqrt{2m \log^{3}(2m)}, C \sqrt{2m \log^{3}(2m)}\right] \ \ \forall n.$$
In particular, for large enough values of $m$, the graphs $X_n$ will be a family of expanders with
high probability.
\end{theorem}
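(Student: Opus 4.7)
My plan is to recast each step $X_{n+1} \to X_n$ of the tower as a random $2$-lift and to invoke Bilu-Linial's spectral bound on random signings at every level. For the binary tree with $H = \Z/2\Z$, I first observe that conditional on the restriction $\Psi_n(a_i)$ of each Haar-random generator to level $n$, the $1$-local cocycles $\overline{\beta(a_i,v)} \in \Z/2\Z$ at vertices $v \in L(n)$ are independent and uniformly distributed. Collecting these across all $v \in L(n)$ and all generators $a_i$ yields a uniformly random signing $s_n : EX_n \to \{\pm 1\}$, and $X_{n+1}$ is precisely the $2$-lift of $X_n$ determined by $s_n$. Because these signings depend on disjoint layers of the branching structure of the generators, $s_0, s_1, s_2, \ldots$ are mutually independent.

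Next I would invoke the spectral decomposition for $2$-lifts, a central observation in \cite{BL:Expanding_on_Trees}: $\Spec(X_{n+1}) = \Spec(X_n) \sqcup \Spec(A_{s_n})$, where $A_{s_n}$ is the adjacency matrix of $X_n$ with entries weighted by $s_n$. This reduces the uniform bound on $\Spec(X_n) \setminus \{\pm 2m\}$ to the simultaneous estimate $\norm{A_{s_n}} \le C\sqrt{2m \log^3(2m)}$ for every $n$. The key input from Bilu-Linial is their main technical theorem: for any $2m$-regular graph $X$ on $N$ vertices, a uniformly random signing satisfies this bound with probability at least $1 - 2\eta_N$, where $\eta_N$ decays as a power of $N$. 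Their proof combines an $\epsilon$-net on the unit sphere with Chernoff-type concentration for the random quadratic form $\langle A_s x, x\rangle$. Tracing their argument with $N_n = 2^n$ vertices and $d = 2m$ should yield a per-level estimate of the form $\eta_{N_n} \le \eta^{n+1}$ with $\eta = 1/(4m^6)$.

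Combining independence across levels with the per-level bound, the probability that $\norm{A_{s_n}}$ stays below the threshold for every $n \ge 0$ is at least $\prod_{n \ge 0}(1 - 2\eta^{n+1})$. A short elementary computation — a term-by-term comparison of the power series for $-\log(1-x)$ — gives $1 - 2\eta^{n+1} \ge (1-2\eta)^{\eta^n}$, and multiplying over $n$ yields $\prod_{n \ge 0}(1 - 2\eta^{n+1}) \ge (1-2\eta)^{\sum_n \eta^n} = (1-2\eta)^{1/(1-\eta)}$, matching the asserted bound.

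The main obstacle will be the quantitative tracking of Bilu-Linial's proof. In \cite{BL:Expanding_on_Trees} the spectral estimate for random signings is used only to establish the \emph{existence} of a good signing, whereas for my purpose one needs the explicit geometric decay $\eta_{N_n} \le \eta^{n+1}$ with the specific constant $\eta = 1/(4m^6)$ for $d = 2m$. This amounts to careful accounting of the universal constants in their Chernoff bound and in the covering number of the unit sphere; it does not appear to require any new conceptual input beyond their proof — one simply needs to read out the probability rather than merely its positivity.
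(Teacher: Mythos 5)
Your global architecture agrees with the paper's: decompose $\Spec(X_{n+1})$ into $\Spec(X_n)$ plus the spectrum of the signed adjacency matrix of the random signing determining the $2$-lift, prove a per-level probability bound of the form $1-2\eta^{n}$, and telescope via the elementary inequality $\prod_n(1-2\eta^n)\ge(1-2\eta)^{1/(1-\eta)}$. That much is exactly what the paper does. The problem is the middle step, where you propose to obtain the per-level bound by ``an $\epsilon$-net on the unit sphere with Chernoff-type concentration for the random quadratic form $\langle A_s x,x\rangle$.'' This is not Bilu--Linial's argument, and it does not work for sparse graphs. For a $2m$-regular graph on $N=2^n$ vertices a net of the sphere has $e^{cN}$ points, while for a unit vector $x$ concentrated on few coordinates the random sum $\sum_{(u,v)\in E}s_{uv}x_ux_v$ has variance as large as order $m$, so Hoeffding only gives tails $e^{-ct^2/m}$; the union bound then forces $t\gtrsim\sqrt{mN}$, which is the trivial bound, not $C\sqrt{2m\log^3(2m)}$. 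The actual mechanism in \cite{BL:Expanding_on_Trees} is different: one bounds the \emph{bilinear} form $|u^tA_{n,s}v|$ only for discrete vectors $u,v\in\{-1,0,1\}^{2^n}$, and then upgrades this to a bound on $\norm{A_{n,s}}$ via their converse to the expander mixing lemma, at the cost of the logarithmic factors appearing in the statement.

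This substitution is not cosmetic, because the discrete bilinear-form estimate itself is only available when the \emph{base} graph $X_n$ is $(\gamma(r),n)$-sparse --- i.e.\ when the number of edges between small vertex sets is controlled --- and sparseness of $X_{n+1}$ must in turn be established as part of the same high-probability event in order to run the next step of the induction. The paper's proof therefore defines the event $E(n+1)$ as the conjunction of the bilinear bound on $\{-1,0,1\}$-vectors \emph{and} the $(\gamma(r),n+1)$-sparseness of $X_{n+1}$, and proves $P\bigl(E(n+1)\mid X_n \text{ is } (\gamma(r),n)\text{-sparse}\bigr)\ge 1-2\eta^n$. Your proposal omits this bootstrapped sparseness condition entirely, and correspondingly your appeal to ``mutual independence'' of the signings $s_0,s_1,\ldots$ is not quite the right framing: the events at level $n+1$ are controlled only \emph{conditionally} on the sparseness event at level $n$ (the paper phrases this as: conditioned on $X_n$, the graph $X_{n+1}$ is a uniformly random $2$-cover), which is what legitimizes multiplying the per-level bounds. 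With the net-plus-Chernoff step replaced by the discrete bilinear bound, the converse mixing lemma, and the inductive propagation of sparseness, your outline becomes the paper's proof; without them, the per-level estimate $\eta_{N_n}\le\eta^{n+1}$ cannot be extracted.
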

\begin{proof}
We will introduce some adjustments to Bilu and Linial's notation in order to be consistent with
the rest of our paper. In particular we use the following substitutions $n \mapsto 2^n, d \mapsto
r = 2m$. Set $\gamma(r) = 10 \sqrt{r \log_2 r}$.
\begin{definition}
The graph $X_n$, is called $(\beta,t)$-sparse if for every $u,v \in \{0,1\}^{X_n}$ with $|\Supp(u)
\cup \Supp(v)| \le t$,
$$u^t A_n v \le \beta \norm{u} \norm{v}.$$
\end{definition}
In the above definition we used $A_n$ for the adjacency matrix of $X_n$. Denote by $A_{n,s}$ the
signed adjacency matrix corresponding to the covering map $X_{n+1} \arrow X_n$. The matrix
$A_{n,s}$ is obtained from $A_n$, by adding signs to the matrix entries, where the sign of each
edge is determined according to the value of the 1-local cocycle across this edge. It is easy to see that the graph $X_{n+1}$ inherits all the
eigenvalues of $X_n$, in fact the corresponding eigenfunctions are obtained by pulling back via the covering map $p_{n,n+1}:X_{n+1}\arrow X_n$. It is shown in
\cite{BL:Expanding_on_Trees} that the eigenvalues of $A_{n,s}$ are exactly the ``new eigenvalues''; these that are not inherited from $X_n$ in this fashion. Let $E(n+1)$ denote the event that both of the
following hold:
\begin{enumerate}
\item \label{itm:allvec} $\forall u,v \in \{-1,0,1\}^{2^n} \ : \ \abs{u^tA_{n,s}v} \le \gamma(r) \norm{u} \norm{v}$
\item \label{itm:sparse} $X_{n+1}$ is $(\gamma(r),n+1)$-sparse.
\end{enumerate}
The proof of \cite[Lemma 3.4(2)]{BL:Expanding_on_Trees} establishes that, conditioned on the
assumption that $X_n$ is $(\gamma(r),n)$-sparse the event $E(n+1)$ holds with probability of at
least
$$1 - 2 \left( 2^{n(4 - 6 \log_2 r)} \right) = 1 - 2 \eta^n.$$

Writing out the union bound at the end of the proof gives an explicit lower bound for the
probability of (\ref{itm:sparse})
 $$P \{(\ref{itm:sparse})\} \ge 1 - 2^{(n+1)[\log_2 r(1 - 20 \log_2 e) + 2]} \ge 1 - \eta^n.$$
In particular if $X_{n+1}$ is $(\gamma(r),n+1)$-sparse then \ref{itm:allvec} cannot be violated
for vectors of $u,v$ such that $|\Supp{u} \cup \Supp{v}| \le n$. The calculation in the beginning
of the proof shows that the probability that condition (\ref{itm:allvec}) is satisfied by all
vectors $u,v$ such that $|\Supp{u} \cup \Supp{v}| > n$ is at least
$$1 - \eta^n.$$ Now the estimate in (\ref{itm:allvec}) follows by applying the union bound.

The distribution of the graph $X_{n+1}$, conditioned on the graph $X_n$, is the distribution of  a
random $2$-cover of the graph $X_n$. Thus the probability that the conditions $E(n)$ hold for all
$n$ is at least as large as the infinite product:
$$\prod_{n=1}^{\infty} (1 - 2 \eta^n ) \ge (1 - 2 \eta)^{\frac{1}{1 - \eta}}.$$
Where the last inequality follows from the standard estimate on convergence of infinite products.
This completes the proof of the proposition.
\end{proof}

\subsection{Expansion is independent of a generating set.} As mentioned earlier we are interested in properties of Schreier graphs that depend only on the action, and not on the specific choice of a generating set. While the value of the Cheeger constant depends on the choice of generating set, the qualitative property of expansion does not.  
\begin{lemma} \label{lem:chaning_generators}
Let $\Delta = \langle W \rangle$ be a finitely generated group and $\Sigma = \langle V \rangle <
\Delta$ a finitely generated subgroup. Then there exists a constant $C$ with the following
property. Whenever $\Delta$ acts on a finite set $L$, with corresponding Schreier graphs $Y =
\Sc(\Delta,W,L), Z = \Sc(\Sigma,V,L)$, Then
$$h(Z)  \le C h(Y).$$
\end{lemma}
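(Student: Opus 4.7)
The plan is to exhibit a constant $C = C(W, V)$ depending only on the two generating sets (not on the action or the set $L$) and show that every cut in $Z$ gives, up to this factor, a cut of no smaller relative size in $Y$.

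First I would express each generator $v \in V$ as a reduced word in $W \cup W^{-1}$, which is possible because $\Sigma < \Delta = \langle W \rangle$. Let $L_{\max} = \max_{v \in V}\ell_W(v)$ denote the maximum length of such an expression and set $C := 2 |V| \cdot L_{\max}$. The geometric picture is then straightforward: a directed edge $(x,v)$ of $Z$ corresponds, under the natural simulation of the $\Sigma$-action inside the $\Delta$-action, to a walk in $Y$ of length $\ell_W(v) \le L_{\max}$ from $x$ to $x^v$, whose successive labels are exactly the letters of the word representing $v$.

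Next, for any finite set $A \subset L$ I would bound $e_Z(A,\overline{A})$ by $C \cdot e_Y(A,\overline{A})$. The idea: every $Z$-edge from $A$ to $\overline{A}$ traces out a path in $Y$ that starts inside $A$ and ends in $\overline{A}$, so it must contain at least one $Y$-edge crossing from $A$ to $\overline{A}$. Summing gives
\[
e_Z(A,\overline{A}) \;\le\; \sum_{f \in E_Y(A,\overline{A})} \#\{\,Z\text{-edges from }A\text{ to }\overline{A}\text{ whose path uses }f\,\}.
\]
The bounded-multiplicity statement needed here is the simple observation that a fixed directed edge $f = (y,w)$ of $Y$ is used by at most $2|V|\cdot L_{\max}$ directed edges of $Z$: once we fix which $v \in V$ is used, which position in the word representing $v$ accounts for $f$, and the orientation, the starting vertex $x$ of the $Z$-edge is uniquely determined by running backwards along the prefix of the word. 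Plugging this into the sum yields $e_Z(A,\overline{A}) \le C \cdot e_Y(A,\overline{A})$.

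Dividing both sides by $\min(|A|, |\overline{A}|)$ and taking the infimum over $A \subset L$ gives $h(Z) \le C \cdot h(Y)$, which is the desired inequality. I do not expect any serious obstacle; the argument is essentially bookkeeping on how paths in $Y$ realize edges in $Z$. The only point to be slightly careful about is not to double-count: the crucial inequality is \emph{at least one} boundary crossing per $Z$-edge, not exactly one, but this direction of counting is precisely what is needed to upper-bound $e_Z(A,\overline{A})$. Since $C$ is manifestly independent of $L$ and of the particular action of $\Delta$, the constant works uniformly for every such Schreier graph, which is exactly what the lemma demands.
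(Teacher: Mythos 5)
Your argument is correct and is essentially the paper's own proof: both express each $v\in V$ as a word of length at most $L_{\max}$ in $W$, note that every boundary edge of $Z$ maps to a path in $Y$ that must cross the same cut, and bound the multiplicity with which a $Y$-edge can occur in such paths by $2|V|L_{\max}$, yielding $C=2|V|L_{\max}$. No gaps; the bookkeeping matches the paper's $\zeta(\cdot)$ construction exactly.
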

Here $h(\cdot)$ stands for the Cheeger constant of the graph as in Definition (\ref{def:expanders}).
Recall that a family of graphs regular graphs is an expander family if their Cheeger constants are
bounded below by a positive constant.
\begin{proof}
Set $m = \max\{|v| \ : \ v \in V\}$ and , where $|\cdot|$ stands for word length with respect to the generating set $W$. As in the proof of the main theorem we have an immersion of graphs $\iota: Z \arrow Y$. This immersion takes an edge of $Z$ to a path in $X$ whose length is at most $m$. For an edge $e \in EY$ let $\zeta(e)
= \{f \in EZ \ : \ e \subset \iota f \}$ and for $E \subset EY$ we let $\zeta(E) = \cup_{e \in E}
\zeta(e)$. Clearly for any set $E \subset EY$ we have $|\zeta(E)| \le 2 m |V| |E|$.

For a subset $A \subset X$, if $f \in e_Z(A,\overline{A})$ is an edge connecting $A$ to its
complement then at least one of the edges in the path $\iota(f)$ must also pass this boundary.
Therefore  $e_{Z}(A,\overline{A}) \subset \zeta(e_{Y}(A,\overline{A}))$. We assume without loss of
generality that $|A| \ge |\overline{A}|$ and write
\begin{eqnarray*}
h(Z) & = & \min_{A \subset X; |A| \le |\overline{A}|} \frac{|e_Z(A,\overline{A})|}{|A|} \\
    & \le & \min_{A \subset X; |A| \le |\overline{A}|} \frac{2 m |V||(e_Y(A,\overline{A}))|} {|A|}
     =  2 m |V| h(X)
\end{eqnarray*}
This concludes the proof, after setting $C = 2 |V| \max\{|v| \ : \ v \in V\}$
\end{proof}
In particular we obtain the following
\begin{corollary} \label{cor:Exp_gen_independent}
Let a group $\Delta$ act on a tree (or on any other collection of finite sets
$\{L(n) \ : \ n \in \N\}$) the expansion of the corresponding sequence of Schreier graphs is
independent on the choice of the set of generators. Even though the exact value of the Cheeger constants changes with the generators.
\end{corollary}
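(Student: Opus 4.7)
The plan is to derive this corollary as a direct two-sided application of the preceding Lemma \ref{lem:chaning_generators}. Given two finite generating sets $V$ and $W$ for the same group $\Delta$, I will invoke the lemma once with $\Sigma = \Delta = \langle W \rangle$ and $V$ viewed as a generating subset, and then a second time with the roles of $V$ and $W$ reversed. Each application produces a constant $C$ (depending only on the word lengths of one generating set expressed in terms of the other) such that the Cheeger constants for the two Schreier graphs are related by $h(\Sc(\Delta,V,L)) \le C_1 \, h(\Sc(\Delta,W,L))$ and $h(\Sc(\Delta,W,L)) \le C_2 \, h(\Sc(\Delta,V,L))$.

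The key observation is that both $V$ and $W$ generate the same group $\Delta$, so every element of $V$ can be written as a finite word in $W^{\pm 1}$ and vice versa. Thus the constants $C_1, C_2$ exist and are independent of $n$; they depend only on the maximal word lengths $\max\{|v|_W : v \in V\}$ and $\max\{|w|_V : w \in W\}$ and on the cardinalities $|V|,|W|$. The lemma was stated for a subgroup $\Sigma < \Delta$, but the argument is unchanged when $\Sigma = \Delta$, since the immersion $\iota$ in the proof of the lemma is simply replaced by the identity on vertices combined with the rewriting of each generator as a path in the other alphabet.

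Combining the two inequalities yields
\begin{equation*}
\tfrac{1}{C_1}\, h(\Sc(\Delta,V,L(n))) \le h(\Sc(\Delta,W,L(n))) \le C_2 \, h(\Sc(\Delta,V,L(n))),
\end{equation*}
uniformly in $n$. Therefore the sequence $h(\Sc(\Delta,V,L(n)))$ is bounded below by a positive constant if and only if $h(\Sc(\Delta,W,L(n)))$ is, which is exactly the qualitative statement that expansion of the tower does not depend on the choice of finite generating set for $\Delta$. I do not anticipate any obstacle: the entire content is already encoded in Lemma \ref{lem:chaning_generators}, and the corollary amounts only to noting that the lemma's hypothesis is symmetric in $V$ and $W$ when $\Sigma = \Delta$.
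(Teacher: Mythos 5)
Your proposal is correct and is essentially the paper's own argument: the paper likewise proves the corollary by applying Lemma \ref{lem:chaning_generators} with $\Sigma = \Delta$, and your explicit symmetric double application (yielding the two-sided comparison $h(\Sc(\Delta,V,L)) \asymp h(\Sc(\Delta,W,L))$ with constants independent of $n$) is just the fully spelled-out version of that one-line proof.
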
 
\begin{proof}
This follows by applying the above lemma with
$\Delta = \Sigma$.
\end{proof}

\subsection{Changing generators for $\Delta$.} By Theorem \ref{thm:BddExp}(\ref{itm:Bdd}) there is, almost surely, a bound
$M$, on the number of connected components of $Y_n$. By Theorem (\ref{thm:BL}) there is a
number $K \in \N$ such that, with probability $1-\epsilon/M$, a random action of $F_K$ on $T$
gives rise to a family of expanding Schreier graphs. Now apply Theorem (\ref{thm:technical}) with
this value of $K$. We obtain random variables as follows: (i) a level $N$, (ii) orbit
representatives $v_1,v_2,\ldots, v_M \in L(N)$, (iii) for each $1 \le j \le M$ group elements
$\alpha_1(v_j),\ldots,\alpha_K(v_j) \in \Delta_{v_j}$ such that the automorphisms
$\{\beta(\alpha_k(v_j),v_j)\}_{1 \le K \le K}$ are $K$ independent Haar-random elements of $W(H)$.

We would like to extend the definition of the elements $\alpha_{i}(v)$ also to vertices that do not
happen to be orbit representatives. This is done by arbitrarily choosing, for every vertex $v \in
L(N)$, a group element $\delta_v \in \Delta$ such that $\delta_v(v) = v_j$ for some $j \in
\{1,2,\ldots, M\}$. We then define
$$\alpha_k(v) \defeq \delta_v \alpha_k(v_j) \delta_v^{-1}.$$ Now for every $v \in L(N)$ the group
$A_v \defeq \langle \beta(\alpha_k(v),v) \ | \ 1 \le k \le K \rangle$ is generated by $K$
independent Haar-random elements of $W(H)$.

The elements $\alpha_k(v)$ are by no means independent when we vary $v$. No independence is
guaranteed when $v,w$ are vertices from different $\Delta$ orbits. Moreover If $v,w$ are two vertices in
the same $\Delta$ orbit then $A_w$ and $A_v$ are very much dependent, they are conjugate subgroups of $W(H)$; but exactly because of this if one of them gives rise to a family of expanders so does the other one.  By our choice of
$K$, for every $\Delta$ orbit there is a probability of at least $1 - \epsilon/M$ that the tower
of Schreier graphs coming from the action of $A_{v_j}$ on the tree is a family of expanders. By
the union bound, with probability of at least $1 - \epsilon$, this happens for every orbit
representative and therefore for every vertex $v \in L(n)$. Let $\hat{h}$ be the infimum of all the Cheeger constants on all of these graphs 
$$\hat{h} \defeq \inf_{v \in L(N)} \inf_n \{ h(\Sc(A_v, \{ \beta(\alpha_k(v),v) \}_{1 \le k \le K} , L(n)))
\}.$$ 
By all we have just said
 $$\mathcal{P}\{\hat{h} > 0 \} > 1 - \epsilon.$$
We conclude by showing that whenever $\hat{h} > 0$ the connected
components of the graphs $Y_n = \Sc(\Delta,W, L(n))$ form a family of expanders. By Corollary \ref{cor:Exp_gen_independent} we are free to change the set of generators. We will in fact enlarge the original generating set, setting
$$V = W \cup \{\alpha_i(v) \ | \ 1 \le i \le K, \ v \in L(N) \}$$ 
we will prove that the family of connected components of the graphs $Z_n \defeq \Sc(\Delta, V, L(n))$ form a family of expander graphs. 

\subsection{Using the expansion above every vertex.} Let us assume by way of contradiction, that
one can find a level $n$ and a connected component $Z \subset Z_n$ with $h(Z) < \eta$ for
arbitrarily small values of $\eta$. Let $p: Z \arrow X_N$ be the canonical covering map and $A
\subset L(n)$ a set realizing the Cheeger constant of $Z$. In other words we assume that $|A| \le
|\overline{A}|$ and still $e(A, \overline{A}) = h(Z) |A| \le \eta |A|$.

Given any vertex $v \in p(Z)$ we denote by $A(v) \defeq p^{-1}(v) \cap A$ and $\overline{A}(v) =
p^{-1}(v) \cap \overline{A}$. The condition $\hat{h} > 0$ implies that for any given vertex $v \in p(Z)$
either $A(v)$ or $\overline{A}(v)$ is very big. The expanding graph $\Sc(A_v, \{
\beta(\alpha_k(v),v) \} , L(n-N))$ is embedded in $p^{-1}(v) \subset Z_n$, using the expansion of
this graph and setting $\eta \le \hat{h}/5|Y_N|$ we obtain the following estimate:
\begin{eqnarray*}
\min \left\{\frac{|A(v)|}{|p^{-1}(v)|},\frac{|\overline{A}(v)|}{\left |p^{-1}(v)\right|} \right\}
    & \le & \frac{\left |e \left(A(v),\overline{A(v)}\right) \right |}{\hat{h} |p^{-1}(v)|} \\
    & \le &  \frac{|e(A,\overline{A})|}{\hat{h} |p^{-1}(v)|} 
       \le Ζ  \frac{\eta |A|}{\hat{h} |p^{-1}(v)|}   \\ 
    & \le &  \frac{\eta |Z|}{2 \hat{h} |p^{-1}(v)|} 
      =     \eta \frac{|p(Z)|}{2\hat{h}} \\
     & \le & \eta \frac{|Y_N|}{2 \hat{h}} 
     \le \frac{1}{10}.
\end{eqnarray*}

\subsection{Passing to the graph below.}
Set $A^{*} = \{v \in p(Z) \ | \ \frac{|A(v)|}{|p^{-1}(v)|} \ge 9/10 \} \subset Z_N$ and
$\overline{A}^{*} = \{v \in p(Z) \ | \ \frac{|\overline{A}(v)|}{|p^{-1}(v)|} \ge 9/10 \} \subset
Z_N$. We have established in the previous paragraph that $p(Z) = A^{*} \amalg \overline{A}^{*}$ is
a partition of the graph $p(Z)$, mimicking the partition $Z = A \amalg \overline{A}$ upstairs.
Recall that $p^{-1}(v) = d^{n-N}$, the degree of the covering map, is independent of the vertex
$v$. Let $e \in e_{p(Z)}(A^{*},\overline{A}^{*})$ be an edge connecting $v \in A^{*}$ and $w \in
\overline{A}^{*}$. At least $90\%$ of the $d^{n-N}$ vertices covering $v$ (resp. $w$) are in
$A^{*}$ (resp. $\overline{A}^{*}$). Therefore at least $80\%$ of the $d^{n-N}$ edges covering $e$
are in $e_Z(A,\overline{A})$, so that $|e_Z(A,\overline{A})| \ge 4 d^{n-N}
|e_{p(Z)}(A^{*},\overline{A}^{*})|/5$, so
\begin{eqnarray*}
\eta \ge h(Z) & = & \frac{|e_{Z}(A,\overline{A})|}{|A|}  
                     \ge \frac{|e_{Z}(A,\overline{A})|}{|Z|} \\
                     & \ge & \frac{4 d^{n-N} |e_{p(Z)}(A^{*},\overline{A}^{*})|/5}{d^{n-N}|p(Z)|} \ge
     \frac{4}{5|p(Z)|}
\end{eqnarray*}
contradicting our assumption that $\eta$ can be chosen to be arbitrarily small and completing the proof.  \qed

\section{The analogy with arithmetic groups - an explicit example.}
\label{sec:SA}
 In this section we focus on a very specific arithmetic example of strong approximation and compare some of the known arithmetic results surveyed in the introduction to the probabilistic results that we obtained in the current paper. We chose an arithmetic example that comes particularly close to our probabilistic setting because it too involves an action on a tree. In fact it was with this example in mind that I started to look for probabilistic analogues for strong approximation theorems. This section cannot be considered as a survey of strong approximation, since the example I treat here is very specific. For good such surveys we refer the reader to \cite[Window: strong approximation for linear groups]{LS:SubgroupGrowth}, \cite{NN:SA_in_group_theory} \cite[chapter 7]{PR:Book}.

\subsection{A specific arithmetic example.}
Consider the group $\Gamma_{\arith} = \PSL_2(\Z)$, fix a prime $p$. The standard embedding $\Z \hookrightarrow \Z_p$ into the ring of $p$-adic integers gives rise to a group embedding $\Gamma_{\arith} \hookrightarrow \PSL_2(\Z_p) \hookrightarrow \PSL_2(\Q_p)$. It is well known that the group $\PSL_2(\Q_p)$ admits a natural action on a tree $T$, its Bruhat-Tits tree, and that the group $\PSL_2(\Z_p)$ fixes a point $O \in T$. Therefore the above embedding renders an action of $\Gamma_{\arith}$ on the rooted tree $(T,O)$. Note that this is not exactly the $p$-ary rooted tree because the first level has $p+1$ vertices. This fact is very important in other settings because it means that
$\Aut(T)$ is not compact but it will not play an important role in our current discussion. In particular we will work only with the compact group $A = \Aut(T,O)$ of all automorphisms fixing the basepoint $O$.

The congruence homomorphism $\Psi_n: \Gamma_{\arith} \arrow \Gamma_{arith,n}$ coming from the restriction of the action on the tree to the first $n$-levels, can be identified with the
arithmetically defined congruence map
\begin{eqnarray*}
 \Psi_n: \PSL_2(\Z) & \arrow &  \PSL_2(\Z/p^n\Z) \\
  \begin{pmatrix}
    a & b \\
    c & d
  \end{pmatrix}  & \mapsto &
  \begin{pmatrix}
    a & b \\
    c & d
  \end{pmatrix}  \quad (\mod p^n)
\end{eqnarray*}
In fact it is possible to identify $L(n)$, the $n^{th}$ level of the tree, with the projective line $\P^{1}(\Z/p^n\Z)$ in such a way that the action of $\Gamma_{\arith,n} \curvearrowright L(n)$ is identified with the natural action of $\PSL_2(\Z/p^n\Z) \curvearrowright \P^{1}(\Z/p^n\Z)$.  Or, what is the same thing, with the action $\PSL_2(\Z/p^n\Z) \curvearrowright \frac{\PSL_2(\Z/p^n\Z)}{B_n}$ on the cosets of the subgroup 
$$B_n \defeq \left\{ \begin{matrix} 
      a & b \\
      0 & d \\
   \end{matrix} \right \} < \PSL_2(\Z/p^n\Z).$$ 
A word of caution is due: since $\Z/p^n \Z$ is a ring and not a field one {\it cannot} identify the projective line with $\Z/p^n \Z \cup \{\infty\}$. 

If we want the arithmetic example to be even more similar to the probabilistic example, we can replace $\Gamma_{\arith}$ with a finite index free subgroup of $\Gamma_{\arith}$. This will change nothing in the discussion that follows. 

\subsection{A specific probabilistic example.}
Consider the same tree $(T,O)$, the $(p+1)$ regular tree rooted at some base-point $O$. Let $\Gamma_{\prob}$ be a subgroup generated by $m$ Haar random independent elements of $A= \Aut(T,O)$. We consider the group $\Gamma_{\prob}$ with its non cyclic subgroups $\Delta < \Gamma_{\prob}$ and the congruence reductions
$$\psi_n: \Gamma_{\prob} < A \arrow \Aut_n(T).$$
 coming from the restriction of the action to the $n^{th}$ level of the tree. all the theorems that we proved hold in this setting. 

\subsection{strong approximation - Cayley or Schreier graphs?} \label{sec:Cay_vs_Sc}
The classical strong approximation theorem says, in our restricted setting, that the congruence maps $\psi_n: \Gamma_{\arith} \arrow \PSL_2(\Z/p^n\Z)$ are surjective. Equivalently $\Gamma_{\arith} = \PSL_2(\Z)$ is dense in $\PSL_2(\Z_p)$. Already this elementary observation, is false in the probabilistic setting. The group $A = \Aut(T,O)$ is not finitely generated as a topological group. Therefore there is no hope of $\Gamma_{\prob}$ ever mapping onto all of the groups $\Aut_n(T)$. In order to see this note that the map
\begin{eqnarray*}
\Aut(T,O) & \arrow & \prod_{n \in \N} \Z/2\Z \\ 
\sigma & \mapsto & \left\{\operatorname{sgn}(\sigma |_{L(n)})\right\}_{n \in \N}
\end{eqnarray*}
sending each automorphism to the list of signatures of the permutations that it induces on the various levels of the tree, is surjective. 

In order to say something meaningful we must weaken our requirements at this early stage of the discussion. There are three natural ways to do this, two of which are addressed in the current paper. 
\begin{itemize}
\item Require $\Gamma_{\prob} < \Aut(T,O)$ to be large in a weaker sense. We establish this in theorem \ref{thm:H-dim}, showing that the Hausdorff dimension of $\Gamma_{\prob}$ is positive. 
\item Instead of discussing the image of $\Gamma_{\prob}$ in $\Aut_n(T)$ and the corresponding Cayley graph $\C_{\prob,n} := \Cay(\Gamma_{\prob},S,\Aut_n(T))$ we can talk about the Schreier graphs $X_{\prob,n} := \Sc(\Gamma_{\prob},S,L(n))$ coming from the action on the $n^{th}$ level of the tree. In a similar way\footnote{Note that this definition of the arithmetic Cayley graphs differs from the one we used in the introduction where we used $\C_N$ as $\Cay(\PSL_2(\Z/N\Z)$. The current notation is much more convenient here though as it agrees with the probabilistic notation. } one defines $\C_{\arith,n} := \Cay(\Gamma_{\arith},S,\PSL_2(\Z/p^n\Z))$ and $X_{\arith,n} := \Sc(\Gamma_{\arith},S,L(n))$. In both arithmetic and probabilistic settings we have a covering map $\C_n \arrow X_n$ from the Cayley to the Schreier graph. It follows that the the latter is connected (resp. a good expander) if the former is. This is because the Cayley graph inherits all the eigenvalues of the Schreier graph. 

Let us note though that in the arithmetic case the difference between the two graphs is polynomial $|\C_{\arith,n}| \cong |X_{\arith,n}|^3$; while in the probabilistic setting, the difference is exponential $|\C_{\prob,n}| \cong (p!)^{|X_{\prob}|/(p!-1)}$. This gives another indication that the Schreier graphs are the right setting for the generalization in the probabilistic setting.
\item Finally not all of the groups $W(H)$ fail to be finitely generated. For example it is shown in \cite{BM:LP} that the group $W(\Alt(d))$ is finitey generated. Where $\Alt(d)$ is the alternating group. It is an interesting question whether a random finitely generated subgroup there is dense with positive probability. If so, one could ask to what extent the correspoinding Cayley graphs exhibits any of the other {\it{strong approximation phenomena}} treated in this paper
 \end{itemize}
 
\subsection{Path expansion - the combinatorial interpretation of passing to subgroups.}
In two ways we seek to strengthen the statement that the Schreier graphs $\{X_{\arith,n}\}$ are connected; or at least have a bounded number of connected components, independent of $n$. First requiring that the connected components of these graphs form a family of expanders; second requiring that the Schreier graphs coming from every non cyclic subgroup still have a bounded number of components. Ultimately, in Theorem \ref{thm:BddExp}, we end up combining these two properties, establishing expansion of the connected components for every non-cyclic subgroup. 

\begin{definition}
A family of Schreier graphs $\Sc(\Gamma, S,L(n))$ are called a family of {\it{path-expanders}} if for every non-cyclic subgroup $\Delta < \Gamma$ there is a bound $M = M(\Delta)$ on the number of connected components of $\Sc(\Delta, \cdot,L(n))$. 
\end{definition}
Like expansion, path expansion has an appealing combinatorial interpretation. Both notions measure ``degree of connectivity'' by abusing the graphs and verifying to what extent they remain connected. In expander graphs, this is done by erasing edges. In the case of path expansion this is done by allowing to pass only along edge paths that admit certain labeling - the labeling corresponding to a generating set of any non cyclic subgroup $\Delta$; and requiring that the number of connected compenents remain bounded on all graphs. Of course the number of connected components does depends on $\Delta$. Indeed given any family of Schreier graphs $X_n = \Sc(\Gamma,S,L(n))$ one can choose a finite index subgroup $\Delta < \Gamma$ that will act trivially on the finite set $L(n)$. With respect to such a $\Delta$ the graph $\Sc(\Delta,\cdot,L(n))$ will just be a union of isolated vertices. 

Here is the hierarchy of strong approximation properties that we treat in this paper: \\
\parbox{\textwidth}{
\begin{center}
\framebox[10cm]{\parbox{9cm}{{\bf{Combined}}\\ The connected components of $\Sc(\Delta,\cdot,L(n))$ form a family of expander graphs, for every non-cyclic $\Delta < \Gamma$.}}  \\ 
\bigskip
\framebox[6cm]{\parbox{4.5cm}{{\bf{Path expansion}}\\ Bounded number of connected components for $\Sc(\Delta, \cdot, L(n))$, for every non-cyclic $\Delta < \Gamma$.}}  \hfill 
\framebox[6cm]{\parbox{5cm}{{\bf{Expansion}}\\ The connected components of $\Sc(\Gamma,S,L(n))$ form a family of expanders. }}\\
 \bigskip
\framebox[10cm]{\parbox{9cm}{{\bf{Connectedness}} \\Bounded number of connected components for $\Sc(\Gamma,S,L(n))$.}} 
\end{center}}

\subsection{Connectedness.}
In the arithmetic setting the strong approximation theorem says that the maps $\psi: \PSL_2(\Z) \arrow \PSL_2(\Z/N\Z)$ are onto, and in particular all the relevant Schreier graphs are connected. In the probabilistic setting the fact that the Shcreier graphs, almost surely have boundedly many connected components is due to Ab\'{e}rt and vir\'{a}g \cite[Proposition 3.10]{AV-dimension_theory} here stated as Theorem
\ref{AV:Bdd}.

\subsection{Expansion.}
In the arithmetic setting, the expansion of Cayley and Schreier graphs of $\PSL_2(\Z/N\Z)$, with a generating set coming form a generating set of $\PSL_2(\Z)$ is a theorem due to Lubotzky Phillips and Sarnak. Their proof of this fact is not purely graph or even group theoretic, as it passes through hyperbolic geometry and uses Selberg's $\frac{3}{16}$ theorem. 

In the probabilistic case the corresponding theorem is established only for binary trees and stated here as Theorem \ref{thm:BL}. I do not think that it should be difficult to generalize it this result to more general trees. 

\subsection{Path expansion}
In the arithmetic setting the path expansion of Cayley and Schreier graphs of congruence quotients is known  as {\it{the strong approximation theorem for linear groups}}. Stated here for the specific case of $\SL_n$. 
\begin{theorem} \label{thm:SAT} (Strong approximation theorem for linear groups).
For every Zariski dense subgroup $\Delta < \SL_n(\Z)$, there exists a number $M = M(\Delta)$ such
that $$[\psi(\SL_n(\Z)): \psi(\Delta)] < M$$ for every congruence map $\psi: \SL_n(\Z) \arrow
\SL_n(\Z/N\Z)$.
\end{theorem}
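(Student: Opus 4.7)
The plan is to follow the classical Matthews--Vaserstein--Weisfeiler and Nori strategy, reducing the problem to prime-power moduli and then invoking Nori's structure theorem for finite subgroups of $\SL_n(\Fe_p)$. First I would pass to a finitely generated Zariski dense subgroup $\Delta_0 = \langle g_1,\ldots,g_r\rangle \le \Delta$, which exists because $\SL_n$ is not a countable union of proper Zariski closed subvarieties; it suffices to prove the bound for $\Delta_0$. Then the Chinese Remainder Theorem $\SL_n(\Z/N\Z) \cong \prod_{p^k\|N} \SL_n(\Z/p^k\Z)$, together with a Goursat-style argument (valid once one throws away the finitely many small primes for which $\SL_n$-reductions can share non-trivial common quotients), reduces the theorem to a bound $[\SL_n(\Z/p^k\Z) : \psi_{p^k}(\Delta_0)] \le M'(p)$ that is uniform in $k$ and equal to $1$ for all but finitely many $p$.

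The heart of the argument is the bound at a single prime $p$. Nori's theorem asserts that for any subgroup $G \le \SL_n(\Fe_p)$ the subgroup $G^{+}$ generated by its elements of order $p$ equals $\mathbf{G}(\Fe_p)^{+}$ for a connected algebraic subgroup $\mathbf{G} \le \SL_n$ defined over $\Fe_p$, and that $|G/G^{+}| \le c(n)$ for a constant depending only on $n$. Applied to $G = \psi_p(\Delta_0)$: outside a finite exceptional set $S$ of primes (determined by the denominators and the other arithmetic data of the $g_i$), the characteristic-zero Zariski density of $\Delta_0$ specializes to Zariski density in characteristic $p$, which forces $\mathbf{G} = \SL_n$ and hence $[\SL_n(\Fe_p) : \psi_p(\Delta_0)] \le c(n)$. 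Lifting from mod $p$ to mod $p^k$ is then routine: the congruence kernel is a nilpotent pro-$p$ group whose successive graded pieces are isomorphic to $\mathrm{Lie}(\SL_n)(\Fe_p)$ as $\SL_n(\Fe_p)$-modules, irreducible once $p \nmid n$, so surjectivity mod $p$ promotes to surjectivity mod $p^k$. Each prime $p \in S$ is handled separately by noting that $\overline{\Delta_0} \le \SL_n(\Z_p)$ is a $p$-adic analytic subgroup of maximal dimension (inherited from Zariski density in characteristic zero), hence of finite index in $\overline{\psi_p(\SL_n(\Z))}$.

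The main obstacle is Nori's theorem itself, whose proof requires a delicate construction of an ``exponential'' passage from nilpotent elements of $\mathrm{Lie}(\SL_n)(\Fe_p)$ to unipotent elements of $\SL_n(\Fe_p)$, together with commutator calculations showing that the Lie algebra spanned by logarithms of order-$p$ elements of $G$ generates exactly $\mathrm{Lie}(\mathbf{G})$. A secondary but non-trivial ingredient is the specialization step: controlling which primes belong to the ``bad set'' $S$, which requires working with a $\Z[1/D]$-model of $\Delta_0$ for some denominator $D$ and invoking spreading-out of Zariski density. Everything downstream --- the CRT packaging, the $p$-adic lifting along the nilpotent filtration, and the finite corrections at exceptional primes --- is formal once Nori's theorem is available. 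The resulting $M(\Delta)$ is the product of Nori's universal constant $c(n)$ with the finitely many local $p$-adic indices at exceptional primes, both of which depend on $\Delta$ only through the finite generating set $\{g_i\}$ chosen at the outset.
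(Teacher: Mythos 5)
The paper does not actually prove this theorem: it is quoted as a classical result, with the proof explicitly attributed to Weisfeiler \cite{Weisfeiler:SAT} and Nori \cite{Nori:SAT}, so there is no in-paper argument to compare against. Your sketch is a faithful outline of exactly that standard Nori/Matthews--Vaserstein--Weisfeiler route (reduction to a finitely generated Zariski dense subgroup --- though the correct justification there is Noetherianity of the Zariski topology rather than the countable-union remark --- then CRT and Goursat to isolate prime powers, Nori's theorem on $G^{+}$ at a single large prime, Frattini-type lifting along the congruence filtration of $\SL_n(\Z_p)$, and openness of the closure at the finitely many exceptional primes), and it is correct as a proof outline at this level of detail.
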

This theorem was first proved by Weisfeiler \cite{Weisfeiler:SAT} using the classification of finite
simple groups. Later a classification free proof was given by Nori \cite{Nori:SAT} using methods
of algebraic geometry. Other treatments of the theorem can be found in \cite{LS:SubgroupGrowth},
\cite{HP:SAT}, \cite{Pink:SAT}.  The following follows directly by applying the theorem in the case $n=2$, and recalling that the only subgroups of $\PSL_2(\Z)$ that fail to be Zariski dense are cyclic groups:
\begin{corollary}
The family of Schreier and Cayley graphs of the groups $\PSL_2(\Z/N\Z)$, with respect to a generating set coming from a generating set of $\SL_2(\Z)$. forms a family of path expanders. 
\end{corollary}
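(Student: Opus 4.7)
The plan is to deduce this as a direct corollary of Theorem~\ref{thm:SAT} for $n=2$, combined with classical strong approximation and a translation between group indices and numbers of graph components.

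Given a non-cyclic subgroup $\Delta < \PSL_2(\Z)$, I would first lift it through the central quotient $\SL_2(\Z) \to \PSL_2(\Z)$ to the full preimage $\widetilde{\Delta} < \SL_2(\Z)$. As the introduction recalls, for subgroups of $\PSL_2$ failure of Zariski density is equivalent to being cyclic, so $\widetilde{\Delta}$ is Zariski dense in $\SL_2(\C)$. This is the only place one uses the hypothesis that $\Delta$ is non-cyclic.

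Next I would apply Theorem~\ref{thm:SAT} to $\widetilde{\Delta}$: it produces a constant $M = M(\Delta)$ such that for every $N$,
$$[\psi_N(\SL_2(\Z)) : \psi_N(\widetilde{\Delta})] \le M.$$
By the classical strong approximation theorem the congruence map $\psi_N : \SL_2(\Z) \to \SL_2(\Z/N\Z)$ is surjective, so the left-hand index is the same as $[\SL_2(\Z/N\Z) : \psi_N(\widetilde{\Delta})]$. Quotienting by the central subgroup $\{\pm I\}$ (which can only shrink the index) gives
$$[\PSL_2(\Z/N\Z) : \psi_N(\Delta)] \le M.$$

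Finally I would translate this uniform group-index bound into the graph statement. For the Cayley graph $\Cay(\PSL_2(\Z/N\Z), \psi_N(\w))$, where $\w$ is any finite generating set of $\Delta$, the connected components are precisely the right cosets of $\psi_N(\Delta)$, and so their number equals the index above, which is at most $M$. For the Schreier graph on $L(N) = \P^1(\Z/N\Z)$, the connected components are the $\psi_N(\Delta)$-orbits, and these form a further quotient of the coset space, so again number at most $M$. In both cases the bound is independent of $N$, which is exactly the path expansion property for the subgroup $\Delta$; applying it to every non-cyclic $\Delta$ yields the statement. There is no real obstacle at this level: all the substantive work is hidden inside Theorem~\ref{thm:SAT}, whose proofs by Weisfeiler and Nori are the actual deep input.
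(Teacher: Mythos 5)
Your proposal is correct and follows exactly the route the paper intends: the paper's own justification is the single remark that the corollary ``follows directly by applying the theorem in the case $n=2$, and recalling that the only subgroups of $\PSL_2(\Z)$ that fail to be Zariski dense are cyclic groups.'' You have simply written out the same deduction in full, including the translation from the uniform index bound of Theorem \ref{thm:SAT} to the bound on connected components of the Cayley and Schreier graphs.
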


In the probabilistic setting the analogous theorem is our Theorem \ref{thm:BddExp}(\ref{itm:Bdd}). 

\subsection{Combination.} \label{sec:combination}
Establishing expansion of Cayley and Schreier graphs of $\PSL_2(\Z/N\Z)$ with respect to generators coming from Zariski dense subgroups of $\SL_2(\Z)$ is just very recently established by Bourgain and Gamburd in \cite{BG:Expansion_and_random_walk_SLd}, \cite{BG:Uniform_expansion_Cayley_graphs},\cite{BG:Random_wals_expansion_CR}; generalizing
previous works of Gamburd \cite{Gamburd:specGap} and Shalom \cite{Shalom:Expander_Amenable},
\cite{Shalom:Expanding_Means}. The following theorem  for example 
\begin{theorem} \nonumber (Bourgain-Gamburd \cite{BG:Expansion_and_random_walk_SLd})
Cayley graphs of $\PSL_d(\Z/p^n\Z)$ are expanders with respect to the projection of any fixed elements in $\SL_n(\Z)$ generating a Zariski-dense subgroup
\end{theorem}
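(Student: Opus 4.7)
The plan is to deploy the Bourgain--Gamburd machinery, which by now is a flexible three-ingredient recipe for producing expansion in finite simple (or quasi-simple) groups. Throughout, write $G = G_{p,n} = \PSL_d(\Z/p^n\Z)$, let $S \subset \SL_d(\Z)$ be our fixed symmetric generating set of the Zariski-dense $\Delta = \langle S \rangle$, and let $\mu$ be the uniform probability measure on the image of $S$ in $G$; expansion of the Cayley graph is equivalent to an $\ell$-independent spectral gap for the associated averaging operator, which in turn follows from a mixing estimate $\|\mu^{(\ell)} - u_G\|_2 \le |G|^{-c}$ for some $\ell = O(\log|G|)$, where $u_G$ is the uniform measure.

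\textbf{Step 1 (large images).} First I would invoke the Nori--Weisfeiler strong approximation theorem (Theorem~\ref{thm:SAT}) to conclude that the image of $\Delta$ under the congruence map $\SL_d(\Z) \arrow \SL_d(\Z/p^n\Z)$ is of bounded index, and in fact equals the whole group for all but finitely many primes $p$ and every $n \ge 1$; we may therefore work with the full group $G$.

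\textbf{Step 2 (non-concentration / girth).} Using Zariski density of $\Delta$, an escape-from-subvarieties argument in the style of Eskin--Mozes--Oh produces the non-concentration input: for $\ell \asymp \log|G|$, no coset of a proper algebraic subgroup, and in particular no proper subgroup of $G$, carries more than a small fraction of $\mu^{(\ell)}$. Concretely this yields $\|\mu^{(\ell)}\|_2^2 \le |G|^{-1/2 - \varepsilon}$ after a logarithmic number of steps, which is the standard hypothesis needed to feed the flattening iteration. In the free-subgroup case this is essentially a girth estimate for the Cayley graph, guaranteed by Zariski density.

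\textbf{Step 3 (product theorem / flattening).} This is the heart of the matter. I would invoke the Helfgott--Pyber--Szab\'{o}--Breuillard--Green--Tao product theorem, in the form established for the congruence quotients $\SL_d(\Z/p^n\Z)$ by Salehi Golsefidy--Varj\'{u}: every generating $K$-approximate subgroup of $G$ is either of size at most $K^C$ or of size at least $|G|/K^C$, with $C = C(d)$. Via the non-commutative Balog--Szemer\'{e}di--Gowers--Tao lemma this is exactly the flattening lemma: so long as $\|\mu^{(\ell)}\|_2^2 > |G|^{-1+\varepsilon}$, one convolution step decreases it by a definite multiplicative factor. Iterating $O(\log|G|)$ times pushes $\|\mu^{(\ell)}\|_2^2$ down to $|G|^{-1+\varepsilon}$.

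\textbf{Step 4 (quasi-randomness).} To close the gap between $|G|^{-1+\varepsilon}$ and true equidistribution one uses a lower bound of Sarnak--Xue type on the minimal dimension $d_{\min}(G)$ of a non-trivial irreducible complex representation of $G = \PSL_d(\Z/p^n\Z)$: $d_{\min}(G) \gtrsim |G|^{\kappa}$ for some $\kappa=\kappa(d)>0$. Expanding $\mu^{(2\ell)} - u_G$ in irreducibles and applying this quasi-randomness bound converts the $L^2$ flatness of Step 3 into the desired mixing $\|\mu^{(2\ell)} - u_G\|_2 \le |G|^{-c}$, yielding the uniform spectral gap.

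\textbf{Main obstacle.} The serious difficulty is Step 3 in the generality of the ring $\Z/p^n\Z$. Helfgott's original argument, and its extensions by Pyber--Szab\'{o} and Breuillard--Green--Tao, work over a field and exploit simplicity of the ambient group in an essential way; the congruence quotients $\PSL_d(\Z/p^n\Z)$ are far from simple and come with a rich filtration by congruence-kernel normal subgroups, on each of which an approximate subgroup could a priori concentrate. Handling this filtration --- showing that an approximate subgroup either sits inside a proper algebraic subgroup (excluded by Step 2) or grows at every level of the filtration --- is the technically demanding ingredient supplied by Salehi Golsefidy--Varj\'{u}; the remaining steps are largely bookkeeping once Step 3 is in hand.
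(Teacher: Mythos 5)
You should first be aware that the paper itself does not prove this statement: it is quoted verbatim as an external result of Bourgain and Gamburd \cite{BG:Expansion_and_random_walk_SLd}, included only to complete the arithmetic side of the arithmetic/probabilistic analogy table. So there is no in-paper proof to compare against, and any assessment of your proposal has to be on its own terms.

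On those terms, your outline is the correct shape of the Bourgain--Gamburd expansion machine (reduce to $L^2$-flattening of $\mu^{(\ell)}$, feed it large images via Nori--Weisfeiler, non-concentration via Zariski density and escape from subvarieties, a product/flattening theorem, and quasi-randomness \`{a} la Sarnak--Xue to finish), and you correctly locate the genuine difficulty in the flattening step over the ring $\Z/p^n\Z$, where the congruence filtration by the kernels of $\SL_d(\Z/p^n\Z) \arrow \SL_d(\Z/p^k\Z)$ provides many normal subgroups on which an approximate subgroup could concentrate. Two caveats. First, this is a proof by citation: every nontrivial step is delegated to a named external theorem, and the one step you identify as the heart of the matter is precisely the one you do not supply. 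Second, the attribution there is historically off: Bourgain and Gamburd's own proof for $\SL_d(\Z/p^n\Z)$ predates both the Pyber--Szab\'{o}/Breuillard--Green--Tao product theorems and Salehi Golsefidy--Varj\'{u}, and does not proceed by quoting a general product theorem for approximate subgroups; they establish the required $L^2$-flattening directly, by a multi-scale argument adapted to the $p$-adic congruence tower together with sum--product-type input. The route you describe (product theorem plus Balog--Szemer\'{e}di--Gowers) is the now-standard packaging and does yield the theorem, but it is a genuinely different, later argument from the one in the cited source. A smaller point: your Step 2 conflates two distinct inputs --- the initial bound $\norm{\mu^{(\ell)}}_2^2 \le |G|^{-\gamma}$ coming from freeness/girth, and the non-concentration of $\mu^{(\ell)}$ on cosets of proper subgroups, which must be maintained at every scale of the flattening iteration, not just at the start.
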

Note that this is the best result known do date. The same result is not known for other algebraic groups, and even for different types of congruences. In \cite{BG:Uniform_expansion_Cayley_graphs} a similar result is proved for the group $\SL_2(\mathbf{F}_p)$ where $p$ ranges over the prime numbers. 

In the probabilistic setting the analogous result is our Theorem \ref{thm:BddExp}(\ref{itm:Exp}).
\bibliography{../tex_utils/yair}
\end{document}